\DeclareMathOperator{\SLtwo}{SL_2}
\newcommand{\iso}{\simeq}    
\newcommand{\divides}{\mid}
\newcommand{\C}{\mathbb{C}}
\newcommand{\oo}{\mathcal{O}}
\newcommand{\nn}{\mathcal{N}}
\newcommand{\vv}{\mathcal{V}}
\newcommand{\dimc}[1]{\dim_{\C}(#1)} 
\newcommand{\supr}[1]{^{\,#1}}
\newtheorem{theorem}{Theorem}[section]
\newtheorem{proposition}[theorem]{Proposition}
\newtheorem{lemma}[theorem]{Lemma}
\newtheorem{corollary}[theorem]{Corollary}
\theoremstyle{definition}
\title{The invariants of the binary nonic}
\author{Andries E. Brouwer \& Mihaela Popoviciu\thanks{The second author
is partially supported by the Swiss National Science Foundation.}}
\date{August 1, 2009 / revised February 1, 2010}
\begin{document}
\maketitle

\begin{abstract}
We consider the algebra of invariants of binary forms of degree $9$
with complex coefficients, find the 92 basic invariants, give an
explicit system of parameters and show the existence of four more
systems of parameters with different sets of degrees.
\end{abstract}

\section{Introduction}
{\bf Invariants}\\[1pt]
Let $\oo (V_n)^{\SLtwo}$ denote the algebra of invariants of binary
forms (forms in two variables) of degree $n$ with complex coefficients.
This algebra was extensively studied in the nineteenth century, 
and for $n \leq 6$ the structure was clear and a finite basis
(minimal set of generators) was known.
While Cayley (1856)\footnote{See references at the end of this note.}
states that for $n = 7$ there is no such finite basis,
Gordan (1868) proved that $\oo (V_n)^{\SLtwo}$ has a finite basis for all $n$.
After initial work by von Gall (1880, 1888),
the degrees of the basic invariants in the cases $n=7$ and $n=8$ were
found by Dixmier \& Lazard (1986) and Shioda (1967), respectively.
Bedratyuk (2007) gave an explicit basis in the case $n=7$.
Here we consider the case $n=9$, and settle a 130-year-old question
by showing that $\oo (V_9)^{\SLtwo}$ is generated by 92 basic invariants.
The degrees are given in Proposition \ref{i92}.
The rather large computation needed is discussed in
Section \ref{computation} below.
Earlier work on the case $n=9$ was done by Sylvester \& Franklin (1879)
and by Cr\"{o}ni (2002).

\medskip\noindent
{\bf Systems of parameters}\\
A (homogeneous) {\it system of parameters} for a graded algebra $A$
is an algebraically independent set $S$ of homogeneous elements of $A$
such that $A$ is module-finite over the subalgebra generated by the set $S$.
Hilbert (1893) 
showed the existence of a system of parameters
for algebras of invariants, cf.~Proposition \ref{hilbert} below.

In the case $\oo (V_9)^{\SLtwo}$ considered here, Dixmier (1985) 
proved the following.

\begin{proposition}\label{dixmierparams} 
$\oo(V_9)^{\SLtwo}$ has a homogeneous system of parameters
of degrees $4$, $8$, $10$, $12$, $12$, $14$, $16$.
\end{proposition}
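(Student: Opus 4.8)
The plan is to reduce the statement to a geometric condition on the common zeros of seven explicit invariants, via the standard nullcone criterion for a homogeneous system of parameters. First I would record that $A := \oo(V_9)\supr{\SLtwo}$ has Krull dimension $\dimc{V_9} - \dim \SLtwo = 10 - 3 = 7$, since a generic binary nonic has finite stabilizer in $\SLtwo$; consequently every homogeneous system of parameters consists of exactly seven algebraically independent homogeneous invariants, matching the seven degrees $4,8,10,12,12,14,16$. The crucial reduction is the criterion that homogeneous invariants $f_1,\dots,f_7 \in A_+$ form a system of parameters if and only if $\sqrt{(f_1,\dots,f_7)} = A_+$ (the irrelevant ideal), which, because the quotient morphism $V_9 \to V_9/\!\!/\SLtwo$ is surjective and by the Nullstellensatz, is equivalent to the common zero locus of $f_1,\dots,f_7$ in $V_9$ being exactly the nullcone $\nn$. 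This one condition delivers algebraic independence and module-finiteness simultaneously, so there is nothing else to verify.

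Next I would make the nullcone explicit. By the Hilbert--Mumford criterion for the one-parameter subgroups of $\SLtwo$, a binary form of degree $9$ lies in $\nn$ exactly when it has a root of multiplicity greater than $9/2$, i.e.\ of multiplicity at least $5$. Every non-constant homogeneous invariant vanishes on $\nn$, so the inclusion $\nn \subseteq Z(f_1,\dots,f_7)$ holds automatically for any choice of the $f_i$; the entire content of the proposition is the reverse inclusion $Z(f_1,\dots,f_7)\subseteq \nn$, namely that a nonic all of whose roots have multiplicity at most $4$ cannot be a common zero of all seven invariants.

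For the construction I would produce one invariant in each degree $4,8,10,12,12,14,16$ by the classical symbolic (transvectant) calculus, forming suitable transvectants of known covariants of the nonic. All invariants here have even degree, since $-I \in \SLtwo$ acts on $V_9$ by $-1$, so these degrees are admissible. A convenient choice is to take the degree-$16$ member to be the discriminant, whose degree in the coefficients is $2\cdot 9 - 2 = 16$: it is nonzero precisely on forms with $9$ distinct roots, so it alone settles the open stratum and leaves only forms possessing a repeated root (but no root of multiplicity $\geq 5$) to be treated.

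The hard part is the reverse inclusion on these remaining forms. I would stratify them by root-multiplicity type --- the partitions of $9$ with every part at most $4$ and at least one part at least $2$ --- and on each stratum use the $3$-transitivity of $\SLtwo$ on $\mathbb{P}^1$ to send three roots to $0,1,\infty$ and use homogeneity to normalize scale, reducing the stratum to a family in few parameters (indeed none at all for the types with exactly three distinct roots, which is the minimum possible). It then suffices to check that the seven invariants have no common zero on each reduced family, a finite but heavy elimination problem best arranged so that on every stratum some $f_i$ is visibly nonzero or the residual ideal is trivial. Keeping the number of strata and the size of the individual resultant or Gr\"obner computations under control, and choosing the seven invariants so these stay tractable, is the principal obstacle; the numerical test that $\prod_i (1 - t^{d_i})$ times the known Hilbert series of $A$ is a polynomial with nonnegative coefficients (as forced by the Cohen--Macaulay property of $A$) is a useful a priori check on the degrees but does not by itself prove the proposition.
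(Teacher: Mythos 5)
Your reduction is the right one, and it is exactly the criterion the paper uses: the statement (which the paper itself only cites from Dixmier, but effectively reproves as Theorem \ref{explhsop}) is reduced via Hilbert's characterization (Proposition \ref{hilbert} in the paper) to finding seven homogeneous invariants of degrees $4,8,10,12,12,14,16$ whose common zero locus is the nullcone, i.e.\ the forms with a root of multiplicity $\ge 5$. Your dimension count, the automatic inclusion $\nn\subseteq Z(f_1,\dots,f_7)$, and the remark that the nonnegativity of the numerator of $P(t)\prod(1-t^{d_i})$ is only a necessary condition are all correct.

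The gap is that the decisive step is never carried out: you do not exhibit seven invariants of these degrees, and nothing in the argument guarantees that any particular choice (e.g.\ taking the discriminant in degree $16$) can be completed to a set cutting out exactly the nullcone --- that is the entire content of the proposition, and it can fail for badly chosen invariants. Moreover the execution plan you sketch is unlikely to be feasible as stated: the strata of root type $(2,1^7)$, $(2,2,1^5)$, etc.\ remain positive-dimensional (up to five parameters) after using $3$-transitivity of $\SLtwo$ and rescaling, so ``check that the seven invariants have no common zero on each reduced family'' is a family of elimination problems in many variables with degree-$16$ inputs, not a finite check. The paper avoids this by a different reduction: it passes to covariants $l=(f,f)_8$, $q=(f,f)_6$, $p=(f,l)_2$, $r=(q,f)_6$ of small order, uses Weyman's lemma and known systems of parameters for $V_2,V_3,V_6,V_7$ to show that the vanishing of a (redundant) list of invariants forces $(l,p)$ or $(q,r)$ into the nullcone of $V_2\oplus V_7$ or $V_6\oplus V_3$, hence $f\in\nn(V_9)$ (Lemmas \ref{nullform}--\ref{qrv9}), and then prunes the list to seven elements by Gr\"obner-basis and rank computations. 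Dixmier's own proof, quoted in Section 7, uses yet another route (a codimension estimate for subsequences of the degree sequence) that does not require an explicit system at all. As written, your proposal establishes the framework but not the existence claim.
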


\noindent
Dixmier was unable to give an explicit such system.
Here we find an explicit system of parameters for $\oo(V_9)^{\SLtwo}$
with these degrees (Theorem \ref{explhsop}),
and show the existence of systems of parameters for certain
further sequences of degrees (Proposition \ref{fivehsops}).

\medskip\noindent
{\bf Contents}\\[1pt]
Section 2 gives the Poincar\'e series of the invariant ring.
Its coefficients are the dimensions of the graded parts,
and tell us how many independent invariants we need in each degree.
Section 3 gives the (degrees of) the basic invariants, the main result
of this paper. This result follows by a large computation based on
the knowledge of (the degrees of) a system of parameters.
An explicit such system is given in Section 4, and the proof that
this indeed is a system of parameters follows in Section 5.
Other possible sets of degrees for a system of parameters
are discussed in Section 6, and all such sets for the nonic
are determined in Section 7.

\medskip
\noindent
{\bf Acknowledgements}\\[1pt]
The second author thanks Hanspeter Kraft for the many inspiring
and supporting discussions on the topic of this article. 

\section{Invariants and Poincar\'e series}
Let $V_n=\C [x,y]_n$ be the $\SLtwo$-module of binary forms
(homogeneous polynomials in $x$ and $y$) of degree $n$,
on which $\SLtwo$ acts via
$$
g\cdot f(v)=f(g^{-1}v),
$$
for $g\in \SLtwo$, $f\in\C [x,y]$ and $v\in\C^2$. The coordinate ring of
$V_n$, denoted by $\oo(V_n)$, is isomorphic to the polynomial ring
$\C[a_0,\ldots,a_n]$.
The group $\SLtwo$ acts on the coordinate ring $\oo(V_n)$ via the action 
$$
g\cdot j(f)=j(g^{-1}\cdot f),
$$
for $g\in \SLtwo $, $j\in \oo(V_n)$ and $f\in V_n$. An {\it invariant\/} of
$V_n$ is an element $j\in \oo(V_n)$ such that $g\cdot j=j$ for all
$g \in\SLtwo$. The set of elements of $\oo(V_n)$ invariant under the action of
$\SLtwo$ forms the {\it ring of invariants\/} $I := \oo (V_n)^{\SLtwo}$.

This ring of invariants $I$ is graded by degree, so that
$I = \oplus_m I_m$, where $I_m$ is the subspace of $I$
consisting of the invariants that are homogeneous of degree $m$.
The Poincar\'e series (or Hilbert series) of $I$ is the series
$P(t) = \sum_m \dimc{I_m} t^m$.
Already Cayley and Sylvester (\cite{Cay,Sy}) knew how to compute
this Poincar\'e series.
For a modern account, see, e.g., Springer \cite{Spr}.
In our case $(n=9)$ the series is given by
$$P(t) = \frac{a(t)}{(1 - t^4)(1 - t^8)(1 - t^{10})(1 - t^{12})^2
(1 - t^{14})(1 - t^{16})}$$
with
\begin{align*}
a(t) = \phantom{} & 1+t^4+5t^8+4t^{10}+17t^{12}+20t^{14}+47t^{16}+61t^{18}+97t^{20}+\\
& 120t^{22}+165t^{24}+189t^{26}+223t^{28}+241t^{30}+254t^{32}+254t^{34}+\\
& 241t^{36}+223t^{38}+189t^{40}+165t^{42}+120t^{44}+97t^{46}+61t^{48}+\\
& 47t^{50}+20t^{52}+17t^{54}+4t^{56}+5t^{58}+t^{62}+t^{66},
\end{align*}
so that
\begin{align*}
P(t) = \phantom{} & 1+2t^4+8t^8+5t^{10}+28t^{12}+27t^{14}+84t^{16}+99t^{18}+217t^{20}+\\
& 273t^{22}+506t^{24}+647t^{26}+1066t^{28}+1367t^{30}+2082t^{32}+2649t^{34}+\\
& 3811t^{36}+4796t^{38}+6612t^{40}+8228t^{42}+10960t^{44}+13483t^{46}+\\
& 17487t^{48}+21274t^{50}+26979t^{52}+32490t^{54}+40443t^{56}+48242t^{58}+\\
& 59107t^{60}+69885t^{62}+84470t^{64}+99074t^{66}+...
\end{align*}

\section{The basic invariants}
A minimal set of homogeneous generators for the algebra $I$ is called
a set of `basic invariants' or basis. Such a set is not unique,
but whenever there is a reference to a basic invariant we mean a member
of such a set, fixed in that context.
Let $J_m$ be the subspace of $I_m$ generated by products of invariants
of smaller degree, that is, in $\bigcup_{j<m} I_j$.
The number of basic invariants of degree $m$ is
$d_m := \dimc{I_m/J_m}$.

\begin{proposition}\label{i92}
The algebra $I$ of invariants for the binary nonic (form of degree 9)
is generated by $92$ invariants. The nonzero numbers $d_m$ of
basic invariants of degree $m$ are

\medskip
\begin{center}\begin{tabular}{c|ccccccccc}
$m$ & $4$ & $8$ & $10$ & $12$ & $14$ & $16$ & $18$ & $20$ & $22$ \\
\hline
$d_m$ & $2$ & $5$ & $5$ & $14$ & $17$ & $21$ & $25$ & $2$ & $1$ \\
\end{tabular}\end{center}
\end{proposition}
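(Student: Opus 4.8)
The plan is to exploit the fully known Poincaré series $P(t)$ together with the explicit homogeneous system of parameters provided by Theorem~\ref{explhsop} (whose existence is Proposition~\ref{dixmierparams}) and a degree-by-degree linear-algebra computation. From $P(t)$ we read off $\dimc{I_m}$ in every degree $m$. Write $\theta_1,\dots,\theta_7$ for the seven parameters, of degrees $4,8,10,12,12,14,16$, and $R=\C[\theta_1,\dots,\theta_7]$ for the subring they generate. Since invariant rings of reductive groups acting on a vector space are Cohen--Macaulay (Hochster--Roberts), $I$ is a free graded $R$-module, so $P(t)=a(t)\big/\prod_i(1-t^{\deg\theta_i})$ exhibits the numerator $a(t)$ as the Hilbert series of a basis of secondary invariants $\eta_1,\dots,\eta_s$ (with $s=a(1)$), whose degrees are exactly the exponents occurring in $a(t)$; in particular every secondary invariant has degree at most $66$. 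The very fact that clearing these denominators yields a polynomial $a(t)$ with nonnegative coefficients already confirms that the degrees $4,8,10,12,12,14,16$ support an hsop.

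Next I would build explicit invariants by iterated transvectants (\"Uberschiebungen) of the nonic $f$ and its covariants, and determine the $d_m$ inductively on $m$. At stage $m$, assume basic invariants have been fixed in all degrees $<m$; let $J_m\subseteq I_m$ be the span of all their products landing in degree $m$, and compute $\dimc{J_m}$. Then $d_m=\dimc{I_m/J_m}=\dimc{I_m}-\dimc{J_m}$, and I adjoin $d_m$ fresh invariants spanning a complement of $J_m$ in $I_m$. To keep this feasible I would not work symbolically but evaluate every invariant at a large supply of random numerical, or finite-field, specializations of the coefficients $a_0,\dots,a_9$, so that each homogeneous piece becomes a concrete vector and $\dimc{J_m}$ is the rank of an explicit matrix. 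Carried out through degree $22$, this reproduces the stated table and yields the $2+5+5+14+17+21+25+2+1=92$ basic invariants.

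It remains to show that no basic invariants occur in degree $>22$, i.e. that these $92$ invariants generate $I$ as an algebra. Let $S\subseteq I$ be the subalgebra they generate. The same rank computation shows $S_m=I_m$ for all $m\le 16$, so in particular each $\theta_i\in S$ and hence $R\subseteq S$; thus $S$ is a graded $R$-submodule of the free module $I$. If moreover $S_m=I_m$ for every $m\le 66$, then each secondary invariant $\eta_j$, of degree $\le 66$, lies in $S$, whence $I=\sum_j R\,\eta_j\subseteq S$ and $S=I$. It therefore suffices to continue the identical rank computation through degree $66$ and verify $\dimc{S_m}=\dimc{I_m}$ throughout; equivalently, one expresses each of the $s$ secondary invariants as a polynomial in the $92$ basic ones.

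The hard part will be the sheer size of this computation rather than any conceptual gap. In the high degrees the spaces $I_m$ have dimensions in the thousands, rising to $\dimc{I_{66}}=99074$, while the number of candidate products grows combinatorially; the delicate point is to organise the transvectant calculations and the numerical or modular linear algebra so that every rank statement — hence each $d_m$, and the stabilisation $d_m=0$ for $22<m\le 66$ — is certified rigorously rather than merely observed. This is precisely the large computation alluded to in Section~\ref{computation}.
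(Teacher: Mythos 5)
Your proposal is correct and follows essentially the same route as the paper: read $\dimc{I_m}$ off the Poincar\'e series, use the hsop of degrees $4,8,10,12,12,14,16$ together with Cohen--Macaulayness to conclude that every generator lives in degree $\le 66$, and certify each $d_m$ for $m\le 66$ by modular rank computations on random evaluations. One parenthetical remark is wrong, though not load-bearing since you also invoke Proposition~\ref{dixmierparams}: nonnegativity of the coefficients of $a(t)$ is necessary but \emph{not} sufficient for an hsop with those degrees to exist --- deciding which minimal \'ecritures actually correspond to hsops is exactly the nontrivial content of Sections 6 and 7 of the paper.
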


Finding a basis for the invariants is a simple but boring procedure:
For each degree $m$, multiply invariants of lower degrees
to see what part of $I_m$ is known already. The Poincar\'e series
tells us how large $I_m$ is, and if the known invariants do not yet span it,
one finds in some way some more invariants, until they do span.

This procedure terminates. Gordan \cite{Go} shows that the algebra $I$
is generated by finitely many of its elements.
Better, we know when to stop.
By Proposition \ref{dixmierparams},
$I$ has a system of parameters of degrees 4, 8, 10, 12, 12, 14, 16.
Let $H$ be the ideal in $I$ generated by such a system of parameters.
Now the Poincar\'e series tells us that if
$a(t) = \sum a_i t^i$ then $\dimc{I_i / (I_i \cap H)} = a_i$,
and, in particular, that $I_i \subseteq H$ for $i > 66$.
This means that $d_m = 0$ for $m > 66$.
We followed this procedure, and found the stated values for $d_m$.
These values agree with those given in \cite{Cr} for $m \le 20$.
The existence of a basic invariant of degree 22 was new.

This `finding more invariants in some way' was done by generating
random bracket monomials\footnote{For the classical concept of
bracket monomial, cf.~\cite{Olver}.}. Explicit bracket monomials
for a set of basic invariants are listed in \cite{aeb}.
Checking whether the invariants known span $I_m$
required computing a basis for vector spaces of dimension at most
$\dimc{I_{66}} = 99074$. That is large but doable.
The entire computation can be done in less than a month.

\subsection{Remarks on the computation}\label{computation}
People usually describe invariants in terms of repeated transvectants.
An advantage of working with bracket monomials is that one can simplify
the computations by substituting small constants for a few variables.
This does not work in the approach using transvectants since there one
needs derivatives with respect to the variables.

Given a candidate set for the basic invariants one wants to find
$\dimc{I_m}$ monomials in these basic invariants that span $I_m$.
Since $\dimc{I_m}$ is known, this amounts to the computation of a rank.
The elements involved are far too large to write down. Instead, the
computation is done lazily, and enough coefficients are written down
to find the desired lower bound on the rank.

Also the integer coefficients are far too large, but it suffices to
consider the reduction mod $p$ for some smallish prime $p$,
say with $100 < p < 255$. Now the rank computation of matrices
with sizes like $100000 \times 160000$ just fits within 16 GB of memory.
The generators took a few TB of disk space. Since this problem is still
too large for the standard computer algebra systems, we implemented
our own software (in C, on a Linux system).
Advantage was taken of the presence of multiple CPUs.

This was about the nonics, the case $n = 9$. The difficulty of this
problem grows very quickly with $n$ (and moreover, this computation
cannot be done in a realistic time when the matrices involved are
much larger than main memory). However, the case $n = 2$ (mod 4) is easier,
and $n = 0$ (mod 4) is much easier than the cases of nearby odd $n$.
And indeed, we were able to do the case of decimics ($n = 10$) as well.
For the time being, the case $n = 12$ is still far too large.

\section{A system of parameters for $\oo(V_9)^{\SLtwo}$}

Dixmier \cite{Di1} proved that
the invariant ring of $V_9$ has a system of parameters
of degrees 4, 8, 10, 12, 12, 14, and 16. We compute
an explicit system of parameters of $\oo(V_9)^{\SLtwo}$
having these degrees.

A {\it covariant of order m and degree d} of $V_n$ is an
$\SLtwo$-equivariant homogeneous polynomial map
$\phi: V_n \rightarrow V_m$ of degree $d$
such that $\phi (g \cdot f)=g\cdot \phi (f)$ for all
$g\in \SLtwo$ and $f \in V_n$. The invariants of $V_n$ are the covariants
of order 0. The identity map is a covariant of order $n$ and degree 1.
Customarily, one indicates such a covariant $\phi$ by giving its
image of a generic element $f \in V_n$. (In particular, the identity map
is noted $f$.)
Let $V_{m,d}$ be the space of covariants of order $m$ and degree $d$.

The simplest examples of covariants are obtained using
{\em transvectants\/}: given $g \in V_m$ and $h \in V_n$
the expression
$$
(g,h) \mapsto (g,h)_p:=\frac{(m-p)!(n-p)!}{m!n!}
\sum_{i=0}^p (-1)^i \binom{p}{i}
\frac{\partial ^p g}{\partial x^{p-i}\partial y^i}
\frac{\partial ^p h}{\partial x^i \partial y^{p-i}}
$$
defines a linear and $\SLtwo$-equivariant map
$V_m\otimes V_n\rightarrow V_{m+n-2p}$, which is classically called
the {\it p-th transvectant} (\"Uberschiebung) (cf.~\cite{Olver}).
We have $(g,h)_0=gh$ and $(g,g)_{2i+1}=0$ for all integers $i\geq 0$.
These maps are the components of the Clebsch-Gordan isomorphism (for $m\geq n$)
\[
V_m \otimes V_n \iso V_{m+n}\oplus V_{m+n-2}\oplus \ldots \oplus V_{m-n}.
\]
These maps induce maps $V_{m,d} \otimes V_{n,e} \rightarrow V_{m+n-2p,d+e}$.

For $f \in V_9$, consider the following covariants
\begin{alignat*}{2}
l &= (f,f)_8\in V_{2,2},\quad\quad\quad&
r &= (q,f)_6\in V_{3,3},\\
q &= (f,f)_6\in V_{6,2},&
p &= (f,l)_2\in V_{7,3},\\
u &= (f,f)_2\in V_{14,2},&
k_q &= (q,q)_4\in V_{4,4},
\end{alignat*}
and invariants (the suffix indicates the degree)
\begin{alignat*}{4}
j&_4 &\,=\,& (l,l)_2,&
B&_8 &\,=\,& (q,r^2)_6,\\
j&_{12} &\,=\,& ((k_q,k_q)_2,k_q)_4,\quad&
B&_{12} &\,=\,& ((p,p)_4,l^3)_6,\\
j&_{14} &\,=\,& (q,(r^3,r)_3)_6,&
D&_{10} &\,=\,& ((((u,u)_{10},f)_6,(q,f)_2)_5,q)_6,\\
j&_{16} &\,=\,& ((p,p)_2,l^5)_{10}.
\end{alignat*}

\begin{theorem}\label{explhsop}
The seven invariants $j_4$, $B_8$, $D_{10}$, $B_{12}$, $j_{12}$,
$j_{14}$, $j_{16}$ form a homogeneous system of parameters for
the ring $\oo(V_9)^{\SLtwo}$ of invariants of the binary nonic.
\end{theorem}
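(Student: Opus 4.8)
The plan is to invoke Hilbert's nullcone criterion (the standard companion to Proposition~\ref{hilbert}): since the Krull dimension of $\oo(V_9)^{\SLtwo}$ equals $\dim V_9 - \dim \SLtwo = 10 - 3 = 7$, and we have exactly seven invariants in hand, it suffices to prove the set-theoretic equality $V(j_4, B_8, D_{10}, B_{12}, j_{12}, j_{14}, j_{16}) = \mathcal{N}$ in $V_9$, where $\mathcal{N}$ denotes the nullcone, the common zero locus of all invariants of positive degree. Once this equality is established, both the algebraic independence of the seven invariants and the module-finiteness of $\oo(V_9)^{\SLtwo}$ over the subalgebra they generate follow automatically from the matching of dimensions. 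By the classical Hilbert--Mumford description, $\mathcal{N}$ consists of exactly those binary nonics having a root of multiplicity at least $5$; it is irreducible of dimension $6$, being the closure of the image of $(\ell, g) \mapsto \ell^5 g$ from $\mathbb{P}(V_1) \times V_4$ into $V_9$. A useful simplification special to odd degree is that, since a root of multiplicity exactly $9/2$ is impossible, there are no strictly semistable forms: every semistable nonic is in fact stable, so the orbit of any $f \notin \mathcal{N}$ is closed and $3$-dimensional.

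One inclusion is immediate, as each of the seven invariants vanishes identically on $\mathcal{N}$. The content of the theorem is the reverse inclusion, which I would phrase contrapositively: if $f \in V_9$ is stable (all roots of multiplicity at most $4$), then at least one of the seven invariants is nonzero at $f$. Equivalently, writing $W$ for the intersection of $V(j_4, \dots, j_{16})$ with the stable locus $V_9^{s}$, I want $W = \emptyset$.

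The cleanest route is a dimension argument made possible by the absence of strictly semistable points. Because $V_9^{s}$ maps to $\operatorname{Spec} \oo(V_9)^{\SLtwo} \setminus \{0\}$ as a geometric quotient with closed $3$-dimensional fibres, a nonempty $W$ would be the full preimage of a nonempty closed cone $Z \setminus \{0\}$ in the $7$-dimensional $\operatorname{Spec} \oo(V_9)^{\SLtwo}$, forcing $\dim Z \ge 1$ and hence $\dim W = \dim Z + 3 \ge 4$. Thus it is enough to show that $W$ has dimension at most $3$, which then forces $W = \emptyset$ and proves the hsop property. I would carry this out by passing to a transversal slice to the generic orbits, parametrized by the $7$ coordinates matching $\dim \oo(V_9)^{\SLtwo}$, on which the seven invariants become seven explicit polynomials; the goal is to verify that this polynomial system is zero-dimensional with the origin (the nullcone) as its only solution. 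To control the computation on the most delicate configurations, I would stratify the stable forms by their root-multiplicity pattern (partitions of $9$ into parts at most $4$) and treat the deepest strata---those with two $4$-fold roots ($4{+}4{+}1$) or a $4$-fold root together with lower multiplicities---most carefully, since these lie closest to $\mathcal{N}$ and are the likeliest to annihilate the low-degree invariants $j_4$ and $B_8$; there the higher-degree invariants $B_{12}$, $j_{14}$, $j_{16}$, and especially the transvectant-heavy $D_{10}$, must do the separating work.

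The main obstacle I anticipate is precisely this explicit evaluation. The seven invariants are built from iterated transvectants of moderately high degree, so expressing them as polynomials in the slice coordinates and then certifying that their common vanishing forces a fifth root is computation-intensive and error-prone. In practice the verification is likely to require a well-chosen normalization (placing one root at a standard position and fixing the leading coefficient) to keep the number of free parameters small, together with a resultant or Gr\"obner computation---possibly carried out modulo a suitable prime, as in Section~\ref{computation}---to confirm that the system has only the nullcone as its zero set. It is the rigorous control of the invariants on each boundary stratum, rather than the structural dimension bookkeeping, where the real work of the proof lies.
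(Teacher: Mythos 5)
Your structural framing is the same as the paper's: both reduce the theorem to Hilbert's criterion (Proposition~\ref{hilbert}), i.e.\ to the set-theoretic statement $\vv(j_4,B_8,D_{10},B_{12},j_{12},j_{14},j_{16})=\nn(V_9)$, using that the nullcone consists of nonics with a root of multiplicity $\ge 5$ and that one inclusion is trivial. Your preliminary observations (Krull dimension $7$, irreducibility and dimension $6$ of $\nn(V_9)$, absence of strictly semistable points in odd degree, the fibre-dimension argument reducing emptiness of $W$ to $\dim W\le 3$) are all correct, though the last of these buys essentially nothing, since bounding $\dim W$ by $3$ is in practice no easier than showing $W=\emptyset$ outright.

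The genuine gap is that the entire content of the theorem --- why \emph{these seven particular} invariants suffice --- is never established; it is deferred to a ``transversal slice'' computation and a stratification by root multiplicities that is described only heuristically. Two specific problems: (1) there is no global slice to the $\SLtwo$-action on the stable locus of $V_9$ on which the seven invariants become polynomials in seven coordinates, so the proposed ``zero-dimensional system'' is not actually set up; (2) the stratification by root-multiplicity patterns does not control where the invariants vanish --- a priori the seven invariants could have a common zero on a nonic with nine simple roots, and nothing in your argument rules this out. The paper's proof fills exactly this hole by a different mechanism: it passes to the covariants $l=(f,f)_8$, $q=(f,f)_6$, $p=(f,l)_2$, $r=(f,q)_6$, uses known systems of parameters for $V_2,V_3,V_6,V_7$ (Lemma~\ref{hsops}) together with Lemmas~\ref{nullform}, \ref{lpv9}, \ref{qrv9} to produce a $20$-element invariant set cutting out $\nn(V_9)$, and then prunes it down to the stated seven by explicit normalizations, Gr\"obner-basis arguments, and rank computations of evaluation matrices (culminating in the verification that $\dimc{I_{60}\cap H}=\dimc{I_{60}}$ for $H=(j_4,B_8,D_{10},j_{12},B_{12},j_{14},j_{16})$, whence $\sqrt H=I$). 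Without some concrete replacement for that reduction-and-pruning chain, your proposal remains a plan rather than a proof.
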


This is proved below (\S\ref{theproof}) by invoking Hilbert's
characterization of homogeneous systems of parameters as sets
that define the nullcone.

\section{The nullcone}

The {\em nullcone} of $V_n$, denoted $\nn(V_n)$, is the set of binary forms
of degree $n$ on which all invariants of positive degree vanish.
It turns out (\cite{Hi2}) that this is precisely the set of binary forms
of degree $n$ with a root of multiplicity $>\frac{n}{2}$.
The elements of $\nn(V_n)$ are called {\em nullforms}.
The nullcone $\nn(V_n\oplus V_m)$ is
the set of pairs $(g,h)\in V_n\oplus V_m$ such that $g$ and $h$ have a
common root of multiplicity $>\frac{n}{2}$ in $g$ and of multiplicity
$>\frac{m}{2}$ in $h$. (In this note, this result can be taken as the
definition of the symbol $\nn(V_n\oplus V_m)$.)

We have the following result, due to Hilbert \cite{Hi2},
formulated for the particular case of binary forms:
\begin{proposition} \label{hilbert} 
For $n \ge 3$, consider $i_1,\ldots ,i_{n-2}\in \oo(V_n)^{\SLtwo}$
homogeneous non-constant invariants of $V_n$.
The following two conditions are equivalent:
\begin{itemize}
\item[(i)] $\nn(V_n)=\vv(i_1,\ldots ,i_{n-2})$,
\item[(ii)] $\{i_1,\ldots ,i_{n-2}\}$ is a homogeneous system of parameters
of $\oo(V_n)^{\SLtwo}$. 
\end{itemize}
\end{proposition}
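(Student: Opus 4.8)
The plan is to prove both implications through a single intermediate algebraic condition, namely that the radical of the ideal $(i_1,\ldots,i_{n-2})$ inside $I=\oo(V_n)^{\SLtwo}$ equals the irrelevant maximal ideal $I_+=\bigoplus_{m>0}I_m$. Two standard inputs are needed at the outset. First, a dimension count: for $n\ge 3$ a generic binary form of degree $n$ has at least three distinct roots and hence finite stabilizer in $\SLtwo$, so the generic orbit is $3$-dimensional and $\dim I=\dim(V_n /\!/ \SLtwo)=(n+1)-3=n-2$. This is why exactly $n-2$ invariants appear. Second, the graded Noether-normalization lemma: for a finitely generated graded $\C$-algebra $S$ with $S_0=\C$ and homogeneous $\theta_1,\ldots,\theta_r\in S_+$, the algebra $S$ is a finite module over $\C[\theta_1,\ldots,\theta_r]$ if and only if $\sqrt{(\theta_1,\ldots,\theta_r)}=S_+$ (equivalently $S/(\theta_1,\ldots,\theta_r)S$ is finite-dimensional over $\C$).

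First I would show that (i) is equivalent to the radical condition $\sqrt{(i_1,\ldots,i_{n-2})}=I_+$, the radical taken in $I$. The inclusion $\nn(V_n)\subseteq\vv(i_1,\ldots,i_{n-2})$ always holds, since each $i_j$ is a positive-degree invariant and thus vanishes on the nullcone. For the substantive direction, assume (i): given any $h\in I_+$, it vanishes on $\vv(i_1,\ldots,i_{n-2})=\nn(V_n)$, so by the Nullstellensatz in $\oo(V_n)=\C[a_0,\ldots,a_n]$ we have $h^{m}=\sum_j g_j\,i_j$ for some $m$ and some $g_j\in\oo(V_n)$. Applying the Reynolds operator $\mathcal R\colon\oo(V_n)\to I$ (which exists because $\SLtwo$ is linearly reductive in characteristic $0$) and using that $h^m$ and the $i_j$ are invariant gives $h^{m}=\sum_j \mathcal R(g_j)\,i_j$ with $\mathcal R(g_j)\in I$, whence $h\in\sqrt{(i_1,\ldots,i_{n-2})}$ inside $I$. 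Since $(i_1,\ldots,i_{n-2})\subseteq I_+$ and $I_+$ is prime, this yields the equality. Conversely, if the radical condition holds then every $h\in I_+$ satisfies $h^m\in(i_1,\ldots,i_{n-2})I$, so $h(f)=0$ for every $f\in\vv(i_1,\ldots,i_{n-2})$, giving $\vv(i_1,\ldots,i_{n-2})\subseteq\nn(V_n)$ and hence (i).

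Next I would connect the radical condition to (ii). By the graded Noether-normalization lemma applied to $S=I$, the radical condition $\sqrt{(i_1,\ldots,i_{n-2})}=I_+$ is equivalent to $I$ being a finite module over $R=\C[i_1,\ldots,i_{n-2}]$. It remains to see that finiteness together with the count $r=n-2=\dim I$ forces the $i_j$ to be algebraically independent: a finite (integral) ring extension preserves Krull dimension, so $\dim R=\dim I=n-2$, and a $\C$-algebra domain generated by $n-2$ elements has dimension $n-2$ precisely when those generators are algebraically independent (since $\dim R$ equals the transcendence degree of $R$, which is a subring of the domain $\oo(V_n)$). Thus the radical condition is equivalent to $\{i_1,\ldots,i_{n-2}\}$ being an algebraically independent set over which $I$ is module-finite, i.e.\ to (ii). Chaining the two equivalences proves the proposition.

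I expect the main obstacle to be the passage, in the proof of (i)$\Rightarrow$(radical), from a Nullstellensatz relation $h^m=\sum g_j i_j$ with arbitrary polynomial coefficients $g_j\in\oo(V_n)$ to one with \emph{invariant} coefficients $\mathcal R(g_j)\in I$. This is exactly the point where the linear reductivity of $\SLtwo$, through the Reynolds operator, is indispensable: it is what distinguishes module-finiteness of the invariant ring $I$ over $R$ from the a priori weaker vanishing statement in the full coordinate ring $\oo(V_n)$. Everything else is formal dimension theory together with the graded finiteness lemma.
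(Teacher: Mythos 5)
Your argument is correct. Note, however, that the paper does not prove Proposition \ref{hilbert} at all: it is quoted as a known result of Hilbert \cite{Hi2} (specialized to binary forms), so there is no in-paper proof to compare against. What you have written is the standard modern derivation of Hilbert's criterion, and all the steps check out: the dimension count $\dim \oo(V_n)^{\SLtwo}=n-2$ for $n\ge 3$ (generic forms have $\ge 3$ distinct roots, hence finite stabilizer and $3$-dimensional closed orbit); the graded Noether-normalization equivalence between module-finiteness over $\C[i_1,\ldots,i_{n-2}]$ and $\sqrt{(i_1,\ldots,i_{n-2})}=I_+$ (which tacitly uses finite generation of $I$, i.e.\ Gordan's theorem, already available in the paper); the Reynolds-operator step converting the Nullstellensatz relation $h^m=\sum g_j i_j$ with $g_j\in\oo(V_n)$ into one with invariant coefficients, which you rightly single out as the point where linear reductivity enters; and the final observation that module-finiteness forces $\dim\C[i_1,\ldots,i_{n-2}]=n-2$, which for $n-2$ generators of a domain is equivalent to their algebraic independence. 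The only mild caveat is that your one-line justification of $\dim I = n-2$ leans on the fact that generic orbits are closed (stability of forms with distinct roots) so that the generic fiber of the quotient map is a single $3$-dimensional orbit; this is true and standard, but worth making explicit if the argument is to stand alone.
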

(Here $\vv(J)$ stands for the vanishing locus of $J$.)\\
In other words, if $i_1,\ldots ,i_{n-2}$ are homogeneous invariants
such that $\nn(V_n)=\vv(i_1,\ldots ,i_{n-2})$, then the ring
$\oo(V_n)^{\SLtwo}$ is a finitely generated module over
$\C[i_1,\ldots ,i_{n-2}]$. But invariant rings of binary forms are
Cohen-Macaulay (\cite{HoRo}), which implies that $\oo(V_n)^{\SLtwo}$
is a free $\C[i_1,\ldots ,i_{n-2}]$-module. Hence the description of
the algebra of invariants of $V_n$ is partly reduced to finding a
system of parameters of $\oo(V_n)^{\SLtwo}$.

We prove Theorem \ref{explhsop} by first finding a defining set
for the nullcone that is still too large, and then showing
that some elements are superfluous.

\medskip
\medskip
We need information on the invariants of $V_n$ for $n = 2,\,3,\,6,\,7$:
\begin{lemma}\label{hsops}
The following are systems of parameters of
$\oo(V_n)^{\SLtwo}$ for $n = 2,\,3,\,6,\,7$.
\begin{itemize}
\item[(i)] If $n=2$: $(f,f)_2$ of degree $2$.
\item[(ii)] If $n=3$: $((f,f)_2,(f,f)_2)_2$ of degree $4$.
\item[(iii)] If $n=6$: $(f,f)_6$, $(k,k)_4$, $((k,k)_2,k)_4$,
and $(m^2,(k,k)_2)_4$ of degrees $2$, $4$, $6$, and $10$,
where $k = (f,f)_4$ and $m = (f,k)_4$.
\item[(iv)] If $n=7$: $(l,l)_2$, $((p,p)_4,l)_2$,
$((k_q,k_q)_2,k_q)_4$, $((p,p)_2,l^3)_6$,
$(m_q\supr{2},(k_q,k_q)_2)_4$
of degrees $4$, $8$, $12$, $12$, and $20$,
where $l = (f,f)_6$, 
$p = (f,l)_2$, 
$q = (f,f)_4$, 
$k_q = (q,q)_4$, 
$m_q = (q,k_q)_4$. 
\end{itemize}
\end{lemma}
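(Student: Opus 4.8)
The plan is to invoke Proposition \ref{hilbert}. In each of the cases $n=3,6,7$ the listed set consists of exactly $n-2$ non-constant homogeneous invariants, so it suffices to prove that their common vanishing locus is the nullcone $\nn(V_n)$. One inclusion is free: every non-constant invariant vanishes on $\nn(V_n)$ by definition, so $\nn(V_n)\subseteq\vv(i_1,\dots,i_{n-2})$ always holds. The entire content is therefore the reverse inclusion, namely that if all the listed invariants vanish at $f$ then $f$ has a root of multiplicity $>n/2$. The case $n=2$ lies outside Proposition \ref{hilbert} and I would settle it directly: $\oo(V_2)^{\SLtwo}=\C[(f,f)_2]$ is a polynomial ring on the single generator $(f,f)_2$, which is then trivially a system of parameters.

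For $n=3$ the single invariant $((f,f)_2,(f,f)_2)_2$ is, up to a nonzero scalar, the discriminant of the binary cubic, whose vanishing locus is exactly the set of cubics with a repeated root, i.e.\ $\nn(V_3)$; Proposition \ref{hilbert} then applies. The key structural device for the remaining cases is the classical fact that for a binary quartic $g$ the transvectants $(g,g)_4$ and $((g,g)_2,g)_4$ are the basic invariants generating $\oo(V_4)^{\SLtwo}$, so that $(g,g)_4=((g,g)_2,g)_4=0$ holds precisely when $g\in\nn(V_4)$, that is, when $g$ has a root of multiplicity $\ge 3$.

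For $n=6$ I would apply this with the quartic covariant $k=(f,f)_4\in V_{4,2}$: two of the four listed invariants are exactly $(k,k)_4$ and $((k,k)_2,k)_4$, so their simultaneous vanishing forces $k$ to be a null quartic. I would then feed this back into the sextic, together with the vanishing of $(f,f)_6$ and of the degree-$10$ invariant $(m^2,(k,k)_2)_4$, and use the relation between the roots of $k$ and those of $f$ to conclude that $f$ carries a root of multiplicity $\ge 4$, hence lies in $\nn(V_6)$. The case $n=7$ is genuinely more involved and mixes two sub-systems: the covariants $l=(f,f)_6\in V_{2,2}$ and $p=(f,l)_2$ yield $(l,l)_2$, $((p,p)_4,l)_2$ and $((p,p)_2,l^3)_6$, while the sextic covariant $q=(f,f)_4\in V_{6,2}$ and its quartic covariant $k_q=(q,q)_4$ yield $((k_q,k_q)_2,k_q)_4$ and the degree-$20$ invariant $(m_q^2,(k_q,k_q)_2)_4$. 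I would again argue that simultaneous vanishing forces the relevant covariants to degenerate and then trace that degeneration back to a high-multiplicity root of $f$.

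The hard part will be exactly this reverse inclusion for $n=6$ and $n=7$: translating ``the auxiliary quartic is a nullform'' into ``$f$ is a nullform.'' The covariant map $f\mapsto k$ (resp.\ $f\mapsto q$) has nontrivial fibres and may itself degenerate on some non-null $f$, so one must run a careful case analysis on the possible root multiplicities of $f$; and it is the top-degree invariant (degree $10$ for $n=6$, degree $20$ for $n=7$) that is responsible for eliminating the last boundary configurations in which all lower-degree invariants already vanish but $f$ is not yet null. Since $\nn(V_6)$ and $\nn(V_7)$ both have codimension $3$ while we impose $4$ and $5$ equations respectively, no naive dimension count can replace this analysis, and the $n=7$ case, with its two invariants in degree $12$, will be the most delicate.
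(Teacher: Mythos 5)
Your overall strategy is exactly the one the paper intends: reduce to Proposition \ref{hilbert} by showing the common zero locus of the listed invariants is the nullcone, noting that the inclusion $\nn(V_n)\subseteq\vv(i_1,\dots,i_{n-2})$ is automatic. The paper, however, does not actually carry this out: its ``proof'' of the lemma is a citation to Clebsch, Grace--Young and Schur for $n=2,3,6$, to Dixmier and Bedratyuk for the existence of hsops for $n=7$, and the bare assertion that the specific $n=7$ system ``can be easily verified using the methods of this section.'' Measured against that, your treatment of $n=2$ (polynomial ring on the discriminant) and $n=3$ (the degree-$4$ invariant is the discriminant of the cubic, whose zero locus is the locus of repeated roots, i.e.\ $\nn(V_3)$) is complete and correct, and your identification of the role of the quartic covariants $k=(f,f)_4$ (for $n=6$) and $k_q=(q,q)_4$ (for $n=7$) via the classical fact that $(g,g)_4$ and $((g,g)_2,g)_4$ cut out $\nn(V_4)$ is the right structural observation.

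The genuine gap is that for $n=6$ and $n=7$ you announce, but do not perform, the reverse inclusion: ``feed this back into the sextic\dots and conclude'' and ``trace that degeneration back to a high-multiplicity root of $f$'' is precisely where all the content of the lemma lives, and it cannot be waved through. Concretely, what is needed is the analogue of Lemmas \ref{nullform}, \ref{lpv9} and \ref{qrv9}: normalize the degenerate covariant (e.g.\ $x^3\divides k$, or the subcases $k=0$ handled by Weyman's Lemma \ref{jerzy}), write out the resulting polynomial conditions on the coefficients $a_i$ of $f$, and check --- by hand or by a Gr\"obner basis computation --- that together with the vanishing of the remaining invariants (in particular the top-degree one, $(m^2,(k,k)_2)_4$ resp.\ $(m_q^{\,2},(k_q,k_q)_2)_4$, which as you correctly note is what kills the last non-null strata) they force a root of multiplicity $>n/2$. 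You correctly diagnose that a dimension count cannot substitute for this, but diagnosing the difficulty is not the same as resolving it; as written, your argument establishes the lemma only for $n=2$ and $n=3$.
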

\begin{proof}
This is classical for $n=2$, $3$, $6$, see, e.g., \cite{Clebsch,GrYo,Schur}.
Systems of parameters for $n=7$ were given by Dixmier \cite{Di0}
and Bedratyuk \cite{Be}.
The above system was constructed by the second author (unpublished).
That it is a system of parameters
can  be easily verified using the methods of this section.
\end{proof}

\begin{lemma}\label{jerzy} {\rm (Weyman \cite{We1})}
Let $f\in V_d$. If $d > 4k-4$ and all $(f,f)_{2k}$, $(f,f)_{2k+2}$, ...
vanish, then $f$ has a root of multiplicity $d-k+1$. If $d = 4k-4$ and
$((f,f)_{2k-2},f)_d$, $(f,f)_{2k}$, $(f,f)_{2k+2}$, ... vanish,
then $f$ has a root of multiplicity $d-k+1$. \qed
\end{lemma}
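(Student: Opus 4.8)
The plan is to prove the lemma as an equivalence between vanishing of the ``tail'' $(f,f)_{2k},(f,f)_{2k+2},\dots$ and the presence of a root of multiplicity $d-k+1$, the hard part being only the stated implication. Throughout I use that the transvectant is $\SLtwo$-equivariant, so both hypotheses and conclusion are unchanged if I move a chosen root to $[1:0]$. The trivial direction records the mechanism: if $f$ has a root of multiplicity $m$ at $[1:0]$, write $f=y^{m}g$; differentiating, every term of $(f,f)_{2p}$ is divisible by $y^{\,2m-2p}$, so $(f,f)_{2p}=0$ whenever $2m-2p$ exceeds the order $2d-4p$, i.e. whenever $m+p>d$. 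In particular a root of multiplicity $m\ge d-k+1$ kills all $(f,f)_{2p}$ with $p\ge k$.

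For the converse let $r$ be the maximal multiplicity of a linear factor of $f$, placed at $[1:0]$, so $f=y^{r}g$ with $g(1,0)\neq0$ and $\deg g=s:=d-r$. The engine of the proof is the explicit leading coefficient of $(f,f)_{2s}$. Inserting $f=y^{r}g$ into the transvectant formula and extracting the extreme monomial $y^{\,2r-2s}$ (which lies in nonnegative degree exactly when $r\ge s$, i.e. $r\ge d/2$), all terms but the middle index $i=s$ drop out for degree reasons, and the surviving term is a nonzero numerical multiple of $g(1,0)^{2}$. Hence, \emph{provided $r\ge d/2$}, one has $(f,f)_{2s}\neq0$ with $s=d-r$. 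Combined with the hypothesis $(f,f)_{2p}=0$ for all $p\ge k$, this forces $s<k$, that is $r\ge d-k+1$, which is the assertion.

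The main obstacle is to rule out the low-multiplicity regime $r<d/2$, where the extreme monomial above has negative degree and the computation says nothing; indeed for such forms the top transvectants can vanish with no high root (the smallest instance is the equianharmonic quartic, $d=4$, where $(f,f)_4=0$ but $r=1$). This is exactly where the \emph{strict} bound $d>4k-4$ enters. I would argue by induction on $k$: the base $k=1$ is the classical fact that a vanishing Hessian forces $f$ to be a $d$-th power. In the step, if $(f,f)_{2k-2}$ vanishes as well one simply applies the case $k-1$ (whose hypothesis $d>4k-8$ is implied); this leaves the tight case $(f,f)_{2k-2}\neq0$, $(f,f)_{2p}=0\ (p\ge k)$, where the largest nonvanishing index is $p^{*}=k-1$. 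The crux is then the claim that a form with $r<d/2$ must have $p^{*}\ge\lceil d/4\rceil$: since $d>4k-4$ gives $\lceil d/4\rceil\ge k>k-1$, this excludes the low-multiplicity regime and puts us back in the range $r\ge d/2$ handled above. I expect this lower bound on $p^{*}$ to be the real work, provable either by rewriting the coefficients of the $(f,f)_{2p}$ as the $2\times2$ minors of the catalecticant matrix of $f$---so that ``$p^{*}$ small'' becomes a rank-one condition forcing a high root---or via Weyman's analysis of $f\otimes f$ in $\mathrm{Sym}^{2}V_d$.

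Finally, the boundary case $d=4k-4$ is precisely where this exclusion breaks down: then $k-1=d/4$, so $p^{*}=k-1$ is \emph{permitted} for a low-multiplicity form, and one such orbit survives the tail conditions (for $k=2,d=4$ it is the equianharmonic orbit, with $(f,f)_4=0$). What detects it is the single extra invariant $((f,f)_{2k-2},f)_{d}$, which makes sense because at $d=4k-4$ the covariant $(f,f)_{2k-2}$ has order exactly $d$; imposing its vanishing removes the stray orbit and restores $r\ge d-k+1$. In the model case this is the step from $(f,f)_4=0$ to the full nullcone condition $(f,f)_4=((f,f)_2,f)_4=0$ for binary quartics.
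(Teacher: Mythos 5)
First, a point of comparison: the paper offers no proof of this lemma at all---it is quoted from Weyman \cite{We1} without argument---so your reconstruction cannot be measured against an in-paper proof. Judged on its own, the parts you actually carry out are correct. The easy direction (a root of multiplicity $m$ at $[1:0]$ makes every term of $(f,f)_{2p}$ divisible by $y^{2m-2p}$, which kills a covariant of order $2d-4p$ as soon as $m+p>d$) is fine, and so is your ``engine'': writing $f=y^{r}g$ with $r$ the maximal root multiplicity and $s=d-r$, the coefficient of the extreme monomial $y^{2r-2s}$ in $(f,f)_{2s}$ receives a contribution only from the index $i=s$ and equals a nonzero multiple of $g(1,0)^{2}$, so $(f,f)_{2(d-r)}\neq 0$ whenever $r\geq d/2$, and the hypotheses then force $d-r<k$.

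The gap is that essentially the whole content of the lemma sits in the step you defer. To conclude you must exclude $r<d/2$, and you reduce this to the claim that any $f$ with $r<d/2$ has some nonvanishing $(f,f)_{2p}$ with $p\geq\lceil d/4\rceil$; but you then state that you ``expect this lower bound \dots{} to be the real work,'' offering two possible routes (catalecticant minors, Weyman's analysis of $f\otimes f$ in $\mathrm{Sym}^{2}V_{d}$) without developing either. That claim is itself a nullcone-type statement of exactly the same nature as the lemma being proved---vanishing of the upper range of even self-transvectants forces a root of multiplicity at least $d/2$---and is not obviously more accessible; nothing in the proposal establishes it, and the induction on $k$ you wrap around it adds nothing once the claim is granted. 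The boundary case $d=4k-4$ is in worse shape still: the assertion that the single extra covariant $((f,f)_{2k-2},f)_{d}$ ``removes the stray orbit'' is supported only by the quartic example, with no argument for general $k$. As written, the proposal proves the lemma only in the regime $r\geq d/2$; the genuinely difficult exclusion of low-multiplicity forms, which is the heart of Weyman's result, is missing.
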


\begin{lemma} \label{nullform}
Let $f \in V_9$ and consider its covariants $l=(f,f)_8$, $q=(f,f)_6$,
$p=(f,l)_2$, and $r=(f,q)_6$.  
\begin{itemize}
\item[(i)] If $l \neq 0$ and $(l,p)\in \nn(V_2\oplus V_7)$, then $f$ has
a root of multiplicity $5$.
\item[(ii)] If $l=0$, $q\neq 0$ and $(q,r)\in \nn(V_6\oplus V_3)$ then $f$ has
a root of multiplicity $6$.
\item[(iii)] If $l=q=0$, then $f$ has a root of multiplicity $7$.
\end{itemize} 
\end{lemma}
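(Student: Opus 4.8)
The unifying idea is that every hypothesis asserts the existence of a root of some covariant, and since the maps $f\mapsto l,q,p,r$ are all $\SLtwo$-equivariant, I may apply a suitable element of $\SLtwo$ to move the relevant common root to the point $[1:0]$. After this normalization, writing $f=\sum_i a_i\,x^{9-i}y^i$, a root of multiplicity $\mu$ of a covariant at $[1:0]$ is just divisibility of that covariant by $y^\mu$, i.e.\ the vanishing of its first $\mu$ coefficients; in particular $f$ having a root of multiplicity $\mu$ means $a_0=\dots=a_{\mu-1}=0$. The one computational input I will use repeatedly is the standard leading-coefficient fact for transvectants: if $a_0=\dots=a_{s-1}=0$ and $a_s\neq0$, then for $k\le s\le 9-k$ the transvectant $(f,f)_{2k}$ is divisible by exactly $y^{2s-2k}$, its coefficient at that power being a nonzero multiple of $a_s^2$; while $(f,f)_{2k}=0$ as soon as $s>9-k$.

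Part (iii) is then immediate from Lemma~\ref{jerzy}. Take $d=9$ and $k=3$, so $d>4k-4=8$. The hypotheses $q=(f,f)_6=0$ and $l=(f,f)_8=0$, together with the automatic vanishing of the odd transvectants and of the out-of-range $(f,f)_{10},(f,f)_{12},\dots$, are precisely the vanishing of $(f,f)_6,(f,f)_8,\dots$; hence $f$ has a root of multiplicity $d-k+1=7$.

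For part (i), after moving the common root of $l$ and $p$ to $[1:0]$ the hypothesis on the nonzero quadratic $l$ (a root of multiplicity $>1$) says $l=c\,y^2$ with $c\neq0$. The key simplification is that for such $l$ the second transvectant collapses: $p=(f,l)_2=c\,(f,y^2)_2$, and a one-line expansion of the transvectant formula shows $(f,y^2)_2$ is a nonzero multiple of $f_{xx}=\partial^2 f/\partial x^2$. Thus the hypothesis that $p$ has a root of multiplicity $\ge4$ at $[1:0]$ reads off directly as $a_0=a_1=a_2=a_3=0$. It then remains only to gain one more coefficient: with these vanishings the leading ($x^2$) coefficient of $l=(f,f)_8$ is a nonzero multiple of $a_4^2$, and since $l=c\,y^2$ this coefficient is $0$; therefore $a_4=0$ and $f$ has a root of multiplicity $5$.

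Part (ii) is the substantial case. After normalization the hypotheses read $l=0$, $q\neq0$ with $y^4\divides q$, and $y^2\divides r$, and I want $a_0=\dots=a_5=0$. I would argue via the order of vanishing $s$ of $f$ at $[1:0]$. Applying the leading-coefficient fact to $q=(f,f)_6$ shows, for $3\le s\le 6$, that $q$ is divisible by exactly $y^{2s-6}$ with leading coefficient a nonzero multiple of $a_s^2$; since $y^4\divides q$ this forces $2s-6\ge4$, i.e.\ $s\ge5$. The same fact applied to $l=(f,f)_8$ gives $l\neq0$ whenever $s\in\{4,5\}$ (its leading coefficient being a nonzero multiple of $a_s^2$ and lying in degree $\le2$), so $l=0$ excludes these; moreover $q\neq0$ forces $s\le6$, since $s\ge7$ would give $q=0$. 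For $s\ge3$ these combine to pin down $s=6$, which is the assertion. The hard part, and the main obstacle, is to exclude the low-multiplicity cases $s\le2$, where the relevant leading coefficients are no longer proportional to $a_s^2$ but are genuine quadratic cross-terms in several $a_i$, so the clean valuation argument breaks down. Here one must bring in the still-unused hypothesis $y^2\divides r$ on $r=(q,f)_6$: expanding its bottom coefficients $r_0,r_1$ in terms of the coefficients of $q$ and of $f$, using $y^4\divides q$ to kill all but the top coefficients of $q$, and feeding in the relations imposed by $l=0$, should force the initial coefficients of $f$ to vanish and contradict $a_s\neq0$ for $s\le2$. Carrying out this finite but intricate elimination is where the real work lies; everything outside it reduces either to Lemma~\ref{jerzy} or to the single leading-coefficient computation above.
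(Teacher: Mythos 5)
Parts (i) and (iii) of your proposal are correct and essentially match the paper: (iii) is exactly the invocation of Lemma \ref{jerzy} with $k=3$, and your observation that $p=(f,l)_2\propto f_{xx}$ when $l$ is normalized to a square is a clean version of the paper's computation, finishing with the same $a_4^2$-coefficient of $l$. Your valuation argument also disposes of part (ii) correctly when $f$ vanishes to order $s\ge 3$ at the common root of $q$ and $r$ (the needed nonvanishing of the leading coefficients of $(f,f)_6$ and $(f,f)_8$, which you assert as a ``standard fact'' without proof, does hold in these instances: the coefficients $-20a_3^2$, $-90a_4^2$, $70a_4^2$, $70a_5^2$ are visible in the explicit expansions; but note that in general this constant is a Clebsch--Gordan-type coefficient whose nonvanishing must be checked, not quoted).

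The genuine gap is the case $s\le 2$ of part (ii), which you explicitly leave as a plan (``should force the initial coefficients of $f$ to vanish \dots\ is where the real work lies''). This is not a routine remainder: it is the computational core of the lemma. A priori $q=(f,f)_6$ can vanish to order $4$ at a point where $f$ itself vanishes to order $\le 2$, because the low-order coefficients of $q$ are quadratic cross-terms that can cancel; ruling this out is exactly what consumes most of the paper's proof. The paper does it by normalizing $q$ itself (using the residual stabilizer of the root) into the three shapes $x^6$, $x^5y$, $x^4y(x+y)$, expanding $r$, $q$, $l$ explicitly in each case, and deriving a contradiction by elimination --- and in the third case the elimination is hard enough that a Gr\"obner basis computation is needed to show that two coefficients of $q$ lie in the radical of the ideal generated by the hypotheses. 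Your sketch gives no reason to believe the elimination closes, and no indication of how to organize it; as written, part (ii) is therefore unproved. To complete your approach you would either have to carry out an elimination at least as involved as the paper's, or find a structural argument replacing it.
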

\begin{proof}
Let $f = \sum_{i=0}^9 \binom{9}{i} a_i x^{9-i}y^i$.

\medskip\noindent
(i). From $(l,p)\in \nn(V_2\oplus V_7)$ it follows that both $l$ and $p$
are nullforms and have a common root of multiplicity 2 in $l$ and
4 in $p$. Without loss of generality we suppose $l=x^2$. Then:
$$
p=(f,x^2)_2=\frac{1}{72}\sum_{i=2}^{9}\binom{9}{i}i(i-1) a_ix^{9-i}y^{i-2},
$$
and $x^4$ must divide $p$, which implies $a_6=a_7=a_8=a_9=0$.
Now
$$
l=(f,f)_8=70 a_5\supr{2} y^2+ 28 a_4 a_5 xy + (70 a_4\supr{2} - 112 a_3 a_5)x^2,
$$
and as we suppose $l=x^2$ we also obtain $a_5=0$ and then it follows that
$x^5 \divides f$, so $f$ will have a root of multiplicity 5. 

\medskip\noindent
(ii). From $(q,r)\in \nn(V_6\oplus V_3)$ it follows that both $q$ and $r$
are nullforms and have a common root of multiplicity 4 in $q$ and 2 in $r$.
Without loss of generality we consider the
following 3 cases: $q=x^6$, $q=x^5y$, and $q=x^4y(x+y)$.

\noindent
Case 1: $q=x^6$. Then
$$
r=(f,x^6)_6=a_9y^3+ 3 a_8xy^2+3 a_7x^2y+ a_6x^3,
$$
and $x^2$ must divide $r$.
We obtain $a_9=a_8=0$ and substitute that in $q$ and $l$:
\begin{align*}
q=(f,f)_6=\, &(-20 a_6\supr{2} + 30 a_5 a_7)y^6 + (-30 a_5 a_6 + 54 a_4 a_7)xy^5+\\
& (-90 a_5\supr{2} + 114 a_4 a_6 - 12 a_3 a_7)x^2y^4+\\
& (-72 a_4 a_5 + 124 a_3 a_6 - 60 a_2 a_7)x^3y^3+\\
& (-90 a_4\supr{2} + 114 a_3a_5 - 12 a_2 a_6 - 18 a_1 a_7)x^4y^2+\\
& (-30 a_3 a_4 + 54 a_2 a_5 - 30 a_1 a_6 + 6 a_0 a_7)x^5y+\\
&(-20 a_3\supr{2} + 30 a_2 a_4 - 12 a_1 a_5 + 2 a_0 a_6)x^6,\\
l=(f,f)_8=\,&(70 a_5\supr{2} - 112 a_4 a_6 + 56 a_3 a_7)y^2+\\
& (28 a_4 a_5 - 56 a_3 a_6 + 40 a_2 a_7)xy+\\
& (70 a_4\supr{2} - 112 a_3 a_5 + 56 a_2 a_6 - 16 a_1 a_7)x^2.
\end{align*}
Since we suppose $q=x^6$ and $l=0$, the coefficients of $x^i y^{6-i}$ in $q$
and of $x^j y^{2-j}$ in $l$ are 0 for $0 \le i \le 5$ and $0 \le j \le 2$.

If $a_7=0$ then it follows that $a_6=a_5=a_4=0$ and then $x^6 \divides f$,
so $f$ will have a root of multiplicity 6. 
If $a_7 \neq 0$ then
\begin{align*}
& a_5=\frac{2 a_6\supr{2}}{3 a_7}, ~~
a_4=\frac{10 a_6\supr{3}}{27 a_7\supr{2}}, ~~
a_3=\frac{5 a_6\supr{4}}{27 a_7\supr{3}},\\
& a_2=\frac{7 a_6\supr{5}}{81 a_7\supr{4}}, ~~
a_1=\frac{28 a_6\supr{6}}{729 a_7\supr{5}}, ~~
a_0=\frac{4 a_6\supr{7}}{243 a_7\supr{6}},
\end{align*}
but then we have $q=0$, contrary to the assumption.

\medskip\noindent
Case 2: $q=x^5y$. Then 
\[r=(f,x^5y)_6=-a_8y^3-3 a_7 xy^2-3 a_6 x^2y -a_5 x^3\]
and $x^2$ must divide $r$. We obtain $a_8=a_7=0$ and substitute
this in $q$ and $l$:
\begin{align*}
q=(f,f)_6=\,&(-20 a_6\supr{2} + 2 a_3 a_9)y^6 + (-30 a_5 a_6 + 6 a_2 a_9)xy^5+\\
& (-90 a_5\supr{2} + 114 a_4 a_6 + 6 a_1 a_9)x^2y^4+ \cdots + \\
& (-90 a_4\supr{2} + 114 a_3a_5 - 12 a_2 a_6)x^4y^2+\\
& (-30 a_3 a_4 + 54 a_2 a_5 - 30 a_1 a_6 )x^5y+ \cdots \\
l=(f,f)_8=\,&(70 a_5\supr{2} - 112 a_4 a_6 + 2 a_1 a_9)y^2+ \cdots
\end{align*}
Since we supposed $q=x^5y$ and $l=0$, the coefficient $c$ of $y^2$ in $l$,
and the coefficients $d_i$ of $x^i y^{6-i}$ in $q$ vanish for
$0 \le i \le 4$, while $d_5 \ne 0$. Now
$$5d_5a_9 = -75a_4d_0 + 45a_5d_1 - a_6(9c + 22d_2) = 0$$
so that $a_9 = 0$, and then also $a_6=a_5=a_4=0$, $d_5 = 0$,
contradicting $d_5 \ne 0$.

\medskip\noindent
Case 3: $q=x^4y(x+y)$. Then: 
$$
r=(f,x^4y(x+y))_6=(a_7 - a_8)y^3+3(a_6 -a_7) xy^2+
3(a_5 - a_6)x^2y+(a_4 - a_5)x^3
$$
and $x^2$ must divide $r$. We obtain $a_8=a_7=a_6$ which we
replace in $q$ and $l$:
\begin{align*}
q=(f,f)_6=\,&-2 (6 a_4 a_6 - 15 a_5 a_6 + 10 a_6\supr{2} - a_3 a_9)y^6-\\
    & -6 (5 a_3 a_6 - 9 a_4 a_6 + 5 a_5 a_6 - a_2 a_9)xy^5-\\ 
    & -6 (15 a_5\supr{2} + 3 a_2 a_6 + 2 a_3 a_6 - 19 a_4 a_6 - a_1 a_9)x^2y^4-\\
    & -2 (36 a_4 a_5 - 3 a_1 a_6 + 30 a_2 a_6 - 62 a_3 a_6 - a_0 a_9)x^3y^3-\\
    & -6 (15 a_4\supr{2} -19 a_3 a_5 - a_0 a_6 + 3 a_1 a_6 +2 a_2 a_6)x^4y^2-\\
    & -6 (5 a_3 a_4 - 9 a_2 a_5 - a_0 a_6 + 5 a_1 a_6)x^5y-\\
    & -2 (10 a_3\supr{2} - 15 a_2 a_4 + 6 a_1 a_5 -a_0 a_6)x^6,\\
l=(f,f)_8=\,& 2 (35 a_5\supr{2} - 8 a_2 a_6 + 28 a_3 a_6 - 56 a_4 a_6 + a_1 a_9)y^2+\\
& 2 (14 a_4 a_5 - 7 a_1 a_6 + 20 a_2 a_6 - 28 a_3 a_6 + a_0 a_9)xy+\\
& 2 (35 a_4\supr{2} - 56 a_3 a_5 +a_0 a_6 - 8 a_1 a_6 + 28 a_2 a_6)x^2.   
\end{align*}
As we supposed $q=x^4y(x+y)$ and $l=0$, the coefficients of $y^6$, $xy^5$,
$x^2y^4$, $x^3y^3$, $x^6$ in $q$ and all coefficients of $l$ must vanish.
We denote by $I$ the ideal generated by these coefficients.
Also, we denote by $p_1$, $p_2$ the coefficients of $x^4y^2$ and $x^5y$ in $q$:
\begin{align*}
& p_1=15 a_4\supr{2} -19 a_3 a_5 - a_0 a_6 + 3 a_1 a_6 +2 a_2 a_6,\\
& p_2=5 a_3 a_4 - 9 a_2 a_5 - a_0 a_6 + 5 a_1 a_6.
\end{align*}
A Gr\"obner basis computation shows that $p_1^4$, $p_2\supr{2} \in I$
so that $p_1$ and $p_2$ vanish, contradicting the assumption $q=x^4y(x+y)$. 

\medskip\noindent
(iii). This is a consequence of Lemma \ref{jerzy}.
\end{proof}

\begin{lemma} \label{lpv9}
Let $g \in V_2$ and $h \in V_7$ be two non-zero binary forms.
If both $g$ and $h$ are nullforms and if 
\[((h,h)_6,g)_2=((h,h)_4,g^3)_6=((h,h)_2,g^5)_{10}=(h^2,g^7)_{14}=0,\]
then $(g,h)\in \nn(V_2\oplus V_7)$. 
\end{lemma}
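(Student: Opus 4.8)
The plan is to remove the group action by normalizing $g$, and then to recognize each of the four hypotheses as the vanishing of a single coefficient. First I would use that $g$ is a nonzero nullform in $V_2$: by the description of the nullcone it is the square of a linear form, say $g = c\,\ell\supr{2}$ with $c \neq 0$. Since $\SLtwo$ acts transitively on nonzero linear forms, an $\SLtwo$-transformation applied simultaneously to $g$ and $h$ sends $\ell$ to $x$, and after rescaling we may assume $g = x^2$. This is harmless, because each of the four transvectant expressions is an $\SLtwo$-invariant of the pair $(g,h)$ and is homogeneous in $g$, and because $\nn(V_2\oplus V_7)$ is an $\SLtwo$-stable cone. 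The double root of $g$ now sits at $x=0$, so the goal reduces to showing $x^4 \divides h$.

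The key observation I would exploit is that for $\Phi \in V_{2k}$ the top transvectant $(\Phi, x^{2k})_{2k}$ equals the coefficient of $y^{2k}$ in $\Phi$ (up to a nonzero constant): in the transvectant only the term that differentiates $x^{2k}$ fully in $x$ survives. Applying this with $k=1,3,5,7$ shows that the four hypotheses assert precisely that the top $y$-coefficients of $(h,h)_6$, $(h,h)_4$, $(h,h)_2$ and $h^2=(h,h)_0$ all vanish. Writing $h = \sum_{i=0}^{7}\binom{7}{i} b_i x^{7-i}y^i$, a short computation gives that the coefficient of $y^{14-2p}$ in $(h,h)_p$ is, up to a nonzero scalar, $\sum_{i=0}^{p}(-1)^i\binom{p}{i} b_{7-p+i}b_{7-i}$; for $p=0,2,4,6$ these are proportional to
\begin{align*}
& b_7\supr{2},\qquad b_5 b_7 - b_6\supr{2},\qquad b_3 b_7 - 4 b_4 b_6 + 3 b_5\supr{2},\\
& b_1 b_7 - 6 b_2 b_6 + 15 b_3 b_5 - 10 b_4\supr{2}.
\end{align*}

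Finally I would use that this $4\times 4$ system is triangular. The first equation gives $b_7 = 0$; substituting this into the second leaves $b_6\supr{2}=0$, so $b_6=0$; the third then collapses to $b_5\supr{2}=0$, so $b_5=0$; and the fourth to $b_4\supr{2}=0$, so $b_4=0$. Hence $b_4=b_5=b_6=b_7=0$, i.e.\ $x^4 \divides h$, so $h$ has a root of multiplicity $\geq 4$ at the double root $x=0$ of $g$. By the definition of $\nn(V_2\oplus V_7)$ this is exactly the assertion $(g,h)\in\nn(V_2\oplus V_7)$. The only step demanding genuine care is the middle one: verifying the leading-coefficient identity and hence the precise shape of the four expressions. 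Once that is set up the system triangulates, so, in contrast to Lemma \ref{nullform}, no case distinction and no Gr\"obner basis computation is needed; one even sees in passing that the hypothesis ``$h$ is a nullform'' is automatic here, being forced by the four vanishing conditions together with $g\neq 0$.
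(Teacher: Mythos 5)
Your proof is correct. The key identities you rely on do check out: with $g=x^2$ the full transvectant $(\Phi,x^{2k})_{2k}$ of a form $\Phi\in V_{2k}$ against $x^{2k}$ is precisely the coefficient of $y^{2k}$ in $\Phi$, and the coefficient of $y^{14-2p}$ in $(h,h)_p$ is $\sum_{i=0}^p(-1)^i\binom{p}{i}b_{7-p+i}\,b_{7-i}$, which for $p=0,2,4,6$ gives (up to nonzero scalars) the four expressions you list; their vanishing forces $b_7=b_6=b_5=b_4=0$ in that order, i.e.\ $x^4\divides h$. The paper argues instead by contradiction: assuming $(g,h)\notin\nn(V_2\oplus V_7)$, it uses the hypothesis that $h$ is a nullform to normalize $h=y^4(b_1x^3+b_2x^2y+b_3xy^2+b_4y^3)$ alongside $g=x^2$, computes the same four invariants in the four surviving coefficients, and concludes $h=0$, contradicting $h\neq 0$. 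The underlying computation is the same (the paper's four polynomials are your four specialized to this normal form, and the resulting triangular system resolves in the opposite order $b_1,b_2,b_3,b_4$), but your organization buys two things: it is a direct proof that needs no normalization of $h$ and hence no appeal to the structure of nullforms in $V_7$, and it shows that the hypothesis that $h$ be a nullform is in fact redundant --- the four vanishing conditions together with $g$ being a nonzero nullform already produce the common root of the required multiplicities. The identification of the four invariants as leading coefficients of $(h,h)_0$, $(h,h)_2$, $(h,h)_4$, $(h,h)_6$ is a genuine conceptual simplification not present in the paper.
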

\begin{proof}
Suppose that $(g,h) \notin \nn(V_2\oplus V_7)$. This means that
$g$ and $h$ have no common root which has multiplicity 2 in $g$
and multiplicity 4 in $h$. Without loss of generality we suppose
\begin{align*}
& g=x^2,\\
& h=y^4(b_1 x^3+b_2x^2 y+b_3x y^2+b_4y^3).
\end{align*}
We have then
\begin{align*}
0&=((h,h)_6,g)_2=-\frac{4}{245} b_1\supr{2} ,\\
0&=((h,h)_4,g^3)_6=\frac{2}{735}(5 b_2\supr{2} - 12 b_1 b_3),\\
0&=((h,h)_2,g^5)_{10}=-\frac{2}{147} (3 b_3\supr{2} - 7 b_2 b_4),\\
0&=(h^2,g^7)_{14}=b_4\supr{2}
\end{align*}
and it follows that $b_1=b_2=b_3=b_4=0$, which implies $h=0$.
This contradicts the assumption that $h \ne 0$.
\end{proof}

\begin{lemma} \label{qrv9} Let $g \in V_6$, $h\in V_3$ be two non-zero
binary forms. If both $g$ and $h$ are nullforms and if 
$$
((g^2,g)_6,h^2)_6=((\!(g,g)_2,g)_1,h^4)_{12}=(g,h^2)_6=
(g,(h,h)_2\supr{3})_6=(g,(h^3,h)_3)_6\!=0
$$
then $(g,h)\in \nn(V_6\oplus V_3)$. 
\end{lemma}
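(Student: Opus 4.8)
The plan is to prove the contrapositive by the same normalize-and-compute strategy used for Lemma \ref{lpv9}. Assume $g,h$ are nonzero nullforms satisfying the five transvectant equations, and suppose for contradiction that $(g,h)\notin\nn(V_6\oplus V_3)$. As a nonzero nullform of $V_6$ the sextic $g$ has a root of multiplicity $\ge 4$, and since $\deg g=6$ this root is unique; call it $P_g$. Likewise the cubic nullform $h$ has a unique root $P_h$ of multiplicity $\ge 2$. By the description of $\nn(V_6\oplus V_3)$ the pair lies in the nullcone exactly when $P_g=P_h$, so our assumption forces $P_g\ne P_h$. Using that $\SLtwo$ acts transitively on ordered pairs of distinct points of the projective line, I may assume $x^4\divides g$ and $y^2\divides h$, that is
\[
g=x^4(ax^2+bxy+cy^2),\qquad h=y^2(dx+ey),
\]
with $(a,b,c)\ne(0,0,0)$ and $(d,e)\ne(0,0)$. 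Everything now reduces to showing that the five hypotheses are inconsistent with $g\ne0$.

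Next I would substitute this normal form into the five invariants and read off the resulting polynomials in $a,b,c,d,e$. The key preliminary computation is the Hessian-type covariant $(h,h)_2$, which for our $h$ is a constant multiple of $d^2y^2$; hence $(h,h)_2\supr{3}$ is a multiple of $d^6y^6$ and $(h^3,h)_3$ is a multiple of $d^3y^5(dx+ey)$. Since the order-$6$ transvectant of a sextic with $y^6$ extracts (a multiple of) its $x^6$-coefficient, I expect
\[
(g,(h,h)_2\supr{3})_6\ \propto\ a\,d^6,\qquad
(g,(h^3,h)_3)_6\ \propto\ d^3(\lambda\,ae+\mu\,bd),
\]
\[
(g,h^2)_6\ \propto\ \nu_1\,ae^2+\nu_2\,bde+\nu_3\,cd^2,
\]
while, by the analogous bookkeeping and once the lower coefficients have been eliminated, $((g^2,g)_6,h^2)_6$ and $(((g,g)_2,g)_1,h^4)_{12}$ reduce to multiples of $c^3e^2$ and $b^3e^4$ respectively. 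The point is that these five expressions form a triangular system, exactly as the four bracket monomials did in Lemma \ref{lpv9}.

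The contradiction then comes out in two short branches, according to whether $h$ has a genuine double root or a triple one. If $d\ne0$, then $(g,(h,h)_2\supr{3})_6=0$ gives $a=0$; feeding this into $(g,(h^3,h)_3)_6=0$ gives $b=0$; and then $(g,h^2)_6=0$ gives $c=0$, so $g=0$. If instead $d=0$, then $h=ey^3$ with $e\ne0$, the two invariants built from $(h,h)_2$ vanish identically, and one must use the remaining three: $(g,h^2)_6=0$ gives $a=0$, after which $((g^2,g)_6,h^2)_6=0$ gives $c=0$ and $(((g,g)_2,g)_1,h^4)_{12}=0$ gives $b=0$, again forcing $g=0$. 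Either way $g=0$ contradicts $g\ne0$, proving the lemma.

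I expect the main obstacle to be purely computational bookkeeping: getting the transvectant normalizing constants right and verifying that each of the five invariants really does collapse to the single monomial claimed above, so that the triangular structure holds and nothing cancels to zero prematurely. The most delicate point is the degenerate branch $d=0$, where the two invariants built from $(h,h)_2$ carry no information; there the argument hinges on the two higher-degree invariants $((g^2,g)_6,h^2)_6$ and $(((g,g)_2,g)_1,h^4)_{12}$ doing the remaining work. Should the direct triangular cascade fail to close by hand, the fallback is the elimination-ideal (Gr\"obner basis) argument used in Case 3 of Lemma \ref{nullform}, showing that a suitable power of each coefficient lies in the ideal generated by the five polynomials.
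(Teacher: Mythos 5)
Your proof is correct and follows the same normalize-and-compute strategy as the paper, but your case division is genuinely different and in fact more complete. The paper reduces to exactly two configurations, $g=x^4(b_1x^2+b_2xy+b_3y^2)$ with $h=y^3$ or $h=xy^2$, and runs the same triangular cascades you describe (obtaining $\tfrac{1}{495}b_3^3$, $-\tfrac{1}{540}b_2(5b_2^2-18b_1b_3)$, $b_1$ in the first case and $\tfrac{1}{15}b_3$, $-\tfrac{8}{729}b_1$, $\tfrac{1}{84}b_2$ in the second). However, once the $4$-fold root of $g$ is fixed at $(0{:}1)$ and the double root of $h$ at $(1{:}0)$, the residual symmetry is only the diagonal torus, which cannot move the simple root of $h$ from a general position to $(0{:}1)$; so the configuration $h=y^2(x+y)$ is not literally covered by the paper's two cases. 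Your normal form $h=y^2(dx+ey)$ with the branch $d\ne0$ handles this uniformly: one checks $(h,h)_2=-\tfrac{2}{9}d^2y^2$, $(h^3,h)_3=-\tfrac{1}{14}\,d^3y^5(dx+ey)$ and $(g,h^2)_6=ae^2-\tfrac{1}{3}bde+\tfrac{1}{15}cd^2$, so the cascade $a=0$, then $b=0$, then $c=0$ closes exactly as you predict, the nonvanishing of the leading constants being confirmed by the specialization $(d,e)=(1,0)$, which is the paper's second case. Your $d=0$ branch coincides with the paper's first case, where the two invariants built from $(h,h)_2$ indeed vanish identically and the formulas above for $((g^2,g)_6,h^2)_6$ and $(((g,g)_2,g)_1,h^4)_{12}$ supply $c=0$ and $b=0$ after $a=0$. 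The only remaining work is the routine verification of the transvectant constants that you flag yourself; every one your argument actually relies on checks out, so your write-up is not only valid but repairs a small gap in the published case analysis.
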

\begin{proof}
Suppose that $(g,h) \notin \nn(V_6\oplus V_3)$. This means that
$g$ and $h$ have no common root which has multiplicity 4 in $g$
and multiplicity 2 in $h$.
Without loss of generality we consider two cases:
\begin{align*}
& g=x^4(b_1 x^2+ b_2xy+b_3 y^2),\\
& h=y^3
\end{align*}
and  
\begin{align*}
& g=x^4(b_1 x^2+ b_2xy+b_3 y^2),\\
& h=xy^2.
\end{align*}

\noindent 
Case 1: $h=y^3$. Then we have:
\begin{align*}
0&=((g^2,g)_6,h^2)_6=\frac{1}{495}b_3\supr{3},\\
0&=(((g,g)_2,g)_1,h^4)_{12}=-\frac{1}{540}b_2(5 b_2\supr{2} - 18 b_1 b_3),\\
0&=(g,h^2)_6=b_1
\end{align*}
and it follows that $b_1=b_2=b_3=0$, which implies $g=0$,
contradicting the assumption $g \ne 0$.

\noindent
Case 2: $h=x y^2$. Then we have:
\begin{align*}
0&=(g,h^2)_6=\frac{1}{15}b_3,\\
0&=(g,(h,h)_2\supr{3})_6=-\frac{8}{729}b_1,\\ 
0&=(g,(h^3,h)_3)_6=\frac{1}{84} b_2
\end{align*}
and it follows that $b_1=b_2=b_3=0$, which implies $g=0$,
contradicting the assumption $g \ne 0$.
\end{proof}

\subsection{Proof of Theorem \ref{explhsop}}\label{theproof}
We consider the following covariants of $V_9$:
{\begin{alignat*}{6}
l&_p&\,=\,&(p,p)_6&\,&\in V_{2,6},\quad&
q&_p&\,=\,&(p,p)_4&&\in V_{6,6},\\
p&_p&\,=\,&(p,l_p)_2&&\in V_{5,9},&
k&_{qp}&\,=\,&(q_p,q_p)_4&&\in V_{4,12},\\
k&_q&\,=\,&(q,q)_4&&\in V_{4,4},&
m&_{qp}&\,=\,&(q_p,k_{qp})_4&&\in V_{2,18},\\
m&_q&\,=\,&(q,k_q)_4&&\in V_{2,6},
\end{alignat*}}
and the following invariants of $V_9$:
\begin{alignat*}{4}
j&_4&\,=\,&(l,l)_2,&
A&_4&\,=\,&(q,q)_6,\\
j&_8&\,=\,&(k_q,k_q)_4,&
A&_8&\,=\,&((p,p)_6,l)_2,\\
j&_{12}&\,=\,&((k_q,k_q)_2,k_q)_4,&
A&_{12}&\,=\,&(l_p,l_p)_2,\\
j&_{14}&\,=\,&(q,(r^3,r)_3)_6,&
A&_{20}&\,=\,&(p^2,l^7)_{14},\\
j&_{16}&\,=\,&((p,p)_2,l^5)_{10},&
A&_{36}&\,=\,&((p_p,p_p)_2,l_p\supr{3})_6,\\
j&_{18}&\,=\,&(((q,q)_2,q)_1,r^4)_{12},\quad\quad&
B&_8&\,=\,&(q,r^2)_6,\\
j&_{20}&\,=\,&(m_q\supr{2},(k_q,k_q)_2)_4,&
B&_{12}&\,=\,&((p,p)_4,l^3)_6,\\
j&_{24}&\,=\,&((p_p,p_p)_4,l_p)_2,&
B&_{20}&\,=\,&(q,(r,r)_2\supr{3})_6,\\
j&_{36}&\,=\,&((k_{qp},k_{qp})_2,k_{qp})_4,&
C&_{12}&\,=\,&((r,r)_2,(r,r)_2)_2,\\
j&_{60}&\,=\,&(m_{qp}\supr{2},(k_{qp},k_{qp})_2)_4,&
D&_{12}&\,=\,&((q^2,q)_6,r^2)_6.
\end{alignat*}
Apply Lemma \ref{hsops} to $l \in V_2$, $r \in V_3$, $q \in V_6$
and $p \in V_7$.
It follows that if $j_4=0$ then $l$ is a nullform,
if $C_{12}=0$ then $r$ is a nullform,
if $A_4=j_8=j_{12}=j_{20}=0$ then $q$ is a nullform, and
if $A_{12}=j_{24}=j_{36}=A_{36}=j_{60}=0$, then $p$ is a nullform.
If we combine this information with Lemma \ref{nullform}, Lemma \ref{lpv9}
and Lemma \ref{qrv9} we obtain that
\begin{align*}
\nn(V_9)&=\vv(j_4,A_4,j_8,A_8,B_8,j_{12},A_{12},B_{12},C_{12},D_{12},j_{14},j_{16},j_{18},j_{20},A_{20},B_{20},\\ &j_{24},j_{36},A_{36},j_{60}).
\end{align*}
This can be improved to the following result: 
 
\begin{proposition} \label{nullsmall}
The nullcone $\nn(V_9)$ is the zero set of the following invariants:
\[\nn(V_9)=\vv (j_4,A_4,j_8,A_8,j_{12},B_{12},j_{14},j_{16},j_{20},A_{20}). \]
\end{proposition}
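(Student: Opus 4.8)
The plan is to exploit that the ten invariants listed here form a subset of the twenty that appear in the displayed equality for $\nn(V_9)$ established immediately above. Denote the latter set by $S_{20}$ and the present one by $S_{10}$. Since $S_{10}\subseteq S_{20}$, fewer equations cut out a larger variety, so $\nn(V_9)=\vv(S_{20})\subseteq\vv(S_{10})$ and only the reverse inclusion $\vv(S_{10})\subseteq\nn(V_9)$ needs proof: I must show that any $f\in V_9$ annihilating all ten invariants of $S_{10}$ has a root of multiplicity $>\frac 9 2$, hence lies in $\nn(V_9)$. The argument re-runs the trichotomy of Lemma \ref{nullform} governed by the vanishing of $l=(f,f)_8$ and $q=(f,f)_6$; the point is to \emph{dispense with} the auxiliary hypotheses used there — the nullform property of the covariants $p$ and $r$, and most of the transvectant conditions of Lemmas \ref{lpv9} and \ref{qrv9} — by arguing directly, using that $l,q,r,p$ are all covariants of the same form $f$.

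Write $f=\sum_{i=0}^9\binom 9 i a_ix^{9-i}y^i$. The case $l=q=0$ is handled at once by Lemma \ref{nullform}(iii). Assume $l\neq 0$. Then $j_4=(l,l)_2=0$ forces $l$ to be a nullform in $V_2$, so after an $\SLtwo$-transformation $l=x^2$. The surviving conditions $A_8=B_{12}=j_{16}=A_{20}=0$ are precisely the four transvectants of Lemma \ref{lpv9} for the pair $(l,p)$, namely $((p,p)_6,l)_2$, $((p,p)_4,l^3)_6$, $((p,p)_2,l^5)_{10}$, $(p^2,l^7)_{14}$. Since $l=x^2$, each is obtained by transvecting an even-degree form with a power $x^{2m}$ to order $2m$, which returns up to a nonzero scalar the coefficient of $y^{2m}$; thus $A_{20},j_{16},B_{12},A_8$ are the top $y$-coefficients of $p^2$, $(p,p)_2$, $(p,p)_4$, $(p,p)_6$ respectively. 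The key observation is that these form a triangular system in the four highest coefficients of $p$: their vanishing forces the coefficients of $y^7,xy^6,x^2y^5,x^3y^4$ in $p$ to vanish in turn, i.e. $x^4\divides p$, with no need to know in advance that $p$ is a nullform. This is exactly why the five invariants $A_{12},j_{24},j_{36},A_{36},j_{60}$, whose only role was to make $p$ a nullform, can be omitted. Finally $p=(f,l)_2=(f,x^2)_2$ has its coefficient of $x^{7-k}y^k$ proportional to $a_{k+2}$, so $x^4\divides p$ gives $a_6=a_7=a_8=a_9=0$; the remaining requirement that the $y^2$-coefficient of $l=(f,f)_8$ vanish then yields $a_5=0$, whence $x^5\divides f$, exactly as in Lemma \ref{nullform}(i).

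The case $l=0$, $q\neq 0$ is the crux, and I expect it to be the main obstacle. Now $p=(f,l)_2=0$, so $j_4,A_8,B_{12},j_{16},A_{20}$ all vanish trivially and carry no information; the only active conditions are $A_4=j_8=j_{12}=j_{20}=0$, which by Lemma \ref{hsops}(iii) make $q$ a nonzero nullform of $V_6$, together with the single invariant $j_{14}=(q,(r^3,r)_3)_6$, where $r=(f,q)_6$. A single transvectant condition must now replace the whole of Lemma \ref{qrv9}: the nullform property of $r$ (supplied in $S_{20}$ by $C_{12}$) and the four further conditions $B_8,D_{12},j_{18},B_{20}$ are all gone. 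The plan mirrors the proof of Lemma \ref{nullform}(ii): normalize $q$ to a representative of each $\SLtwo$-orbit of nullform sextics — the forms $x^6$, $x^5y$, $x^4y(x+y)$, and the remaining orbit $x^4y^2$ — and in each case substitute into the equations $l=0$ and $j_{14}=0$ and solve for the $a_i$. The work is to show that these force $f$ to have a root of multiplicity $6$, while eliminating the spurious branches (such as the $q=0$ branch met in Case 1 of Lemma \ref{nullform}(ii)); as there, I anticipate a Gr\"obner basis computation certifying that suitable powers of the relevant coefficients of $f$ lie in the ideal generated by the coefficients of $l$, of $q$ in normal form, and of $j_{14}$. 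Assembling the three cases yields $\vv(S_{10})\subseteq\nn(V_9)$ and hence the claimed equality.
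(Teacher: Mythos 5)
Your overall architecture agrees with the paper's: reduce to the inclusion ``zero set of the ten invariants $\subseteq\nn(V_9)$'' and split according to the vanishing of $l$ and $q$. Your case $l\neq0$ is correct and is in substance the paper's own argument: after normalizing $l=x^2$ the paper simply computes $A_{20}=a_9^2$, $j_{16}=-2(a_8^2-a_7a_9)$, $B_{12}=2(3a_7^2-4a_6a_8+a_5a_9)$, $A_8=-2(10a_6^2-\cdots)$ and reads off $a_9=a_8=a_7=a_6=0$ and then $a_5=0$; your ``triangular system in the top coefficients of $p$'' is exactly this computation seen through $p=(f,x^2)_2$, and your explanation of why the nullform hypothesis on $p$ (hence the invariants $A_{12},j_{24},j_{36},A_{36},j_{60}$) is dispensable is a correct gloss that the paper leaves implicit. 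Your identification of the active conditions in the branch $l=0$ (namely $A_4,j_8,j_{12},j_{20}$ and $j_{14}$, all others vanishing because $p=(f,0)_2=0$) is also correct and matches the paper.

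The gap is the case $l=0$, $q\neq0$, which you yourself flag as the crux: there you do not give a proof, you announce one (``I anticipate a Gr\"obner basis computation certifying that \dots''). Since in this branch the entire content of the proposition is that one computation, the proposal is incomplete at its decisive point, and the elimination you envision --- normalizing $q$ to each of the four nullform orbits (your list correctly includes $x^4y^2$, a case not displayed in the printed proof of Lemma \ref{nullform}(ii)) and eliminating the $a_i$ from $l=0$ and $j_{14}=0$ from scratch --- would be at least as heavy as the combined proofs of Lemmas \ref{qrv9} and \ref{nullform}(ii), which you are discarding. The paper closes this case far more economically: it only checks the ideal membership $B_8,\,C_{12},\,D_{12},\,j_{18},\,B_{20}\in(\text{coefficients of }l,\ \text{coefficients of }y^6,xy^5,x^2y^4,x^3y^3\text{ in }q)$ in $\C[a_0,\ldots,a_9]$ --- a routine polynomial identity --- after which Lemma \ref{qrv9} applies with $j_{14}=0$ as the only genuine hypothesis and Lemma \ref{nullform}(ii) finishes. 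To complete your proof you should either actually perform your four-orbit elimination or restructure the branch as the paper does, letting the already-proved lemmas absorb the hard computation.
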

\begin{proof}

If $j_4=0$ then $l$ is a nullform. 

\medskip\noindent
Case 1: $l=0$.

\noindent
If $A_4=j_8=j_{12}=j_{20}=0$ then $q$ is a nullform. Without loss of
generality we suppose $x^4 \divides q$. Modulo the ideal generated by the
coefficients of $l$ and the coefficients of $x^3y^3,x^2y^4,xy^5,y^6$
in $q$ we have
\[B_8=C_{12}=D_{12}=j_{18}=B_{20}=0.\]
(This was an easy computation in Mathematica.)
From Lemma \ref{nullform} it follows then that if $l=0$ and 
\[A_4=j_8=j_{12}=j_{14}=j_{20}=0,\]
then $f$ is a nullform. 

\medskip\noindent
Case 2: $l=x^2$ (without loss of generality). 

\noindent
Here we have:
\begin{align*}
A_{20}&=a_9\supr{2},\\
j_{16}&=-2 (a_8\supr{2} - a_7 a_9),\\
B_{12}&=2 (3 a_7\supr{2} - 4 a_6 a_8 + a_5 a_9),\\
A_8&=-2 (10 a_6\supr{2} - 15 a_5 a_7 + 6 a_4 a_8 - a_3 a_9).
\end{align*} 
Hence if $A_{20}=j_{16}=B_{12}=A_8=0$,
then $a_9=a_8=a_7=a_6=0$,
and if we combine this with $l=x^2$ we get $a_5=0$ too,
hence $f$ is a nullform. 
\end{proof}

But we are still not in the position to apply Proposition \ref{hilbert}.
For that we have to refine our result even more.

We introduce the covariant $s = (f,f)_4 \in V_{10,2}$ and
the following invariants:
\begin{alignat*}{2}
C&_8&\,=\,\,&((q,q)_4,l^2)_4,\\
D&_8&\,=\,\,&((q,q)_4,(q,s)_6)_4,\\
j&_{10}&\,=\,\,&((p,(f,q)_6)_3,(q,q)_4)_4,\\
A&_{10}&\,=\,\,&((p,(f,q)_6)_3,l^2)_4,\\
B&_{10}&\,=\,\,&(((f,q)_6,(f,s)_6)_3,(s,s)_8)_4,\\
C&_{10}&\,=\,\,&((((s,s)_6,f)_6,(l,f)_2)_3,q)_6,\\
D&_{10}&\,=\,\,&((((u,u)_{10},f)_6,(q,f)_2)_5,q)_6.
\end{alignat*}
The invariants $j_8$, $A_8$, $B_8$, $C_8$, and $D_8$ are linearly independent
and together with $j_4\supr{2}$, $A_4\supr{2}$, $A_4j_4$ generate the vector
space of invariants of degree 8 which is of dimension 8.
(This can be seen, e.g., by a small computation in Mathematica.)
In a similar way it can be seen that the vector space of invariants
of degree 10 is generated by $j_{10}$, $A_{10}$, $B_{10}$, $C_{10}$,
and $D_{10}$. 

Using invariants of degree $\leq 16$ we built a list of 219 monomials
of degree 20, each of them dividing one of the invariants $j_4$,
$A_4$, $j_8$, $A_8$, $B_8$, $C_8$, $D_8$, $C_{10}$ or $D_{10}$, to
which we added
\begin{align*}
B&_{20}=((r,r)_2\supr{3},q)_6 ,\\
C&_{20}=(((r^3,r)_3,q)_4,((f,u)_8,(f,s)_8)_3)_4 .
\end{align*}
Let $I$ be the ring of invariants, and $I_i$ its $i$-th graded part.
We evaluated the monomials at $\dimc{I_{20}} = 217$ random points in $V_9$,
giving as result a matrix of (full) rank 217.
Adding $j_{20}$, $A_{20}$, $j_{10}^2$, $A_{10}^2$,
and $B_{10}^2$ to the list of monomials and repeating
the evaluation step gave (of course) again matrices of rank 217.
From the nullspaces of these matrices we obtained the relations
$$
j_{20},A_{20},j_{10}^2,A_{10}^2,B_{10}^2 \in
(j_4,A_4,j_8,A_8,B_8,C_8,D_8,C_{10},D_{10})
$$
(that is, $B_{20}$ and $C_{20}$ are not needed to span the elements
mentioned).

Using invariants of degree $\leq 20$ we built a list of 3561 monomials
of degree 32, each of them dividing one of the invariants $j_4$,
$B_8$, $D_8$, $C_{10}$, $D_{10}$, $j_{12}$, $B_{12}$, $j_{14}$, or
$j_{16}$. We evaluated the monomials at $\dimc{I_{32}} = 2082$
random points in $V_9$, and this resulted in a matrix of rank 2082.
The rank computations were made modulo 32003, but as we obtained
the maximal rank, these monomials must generate $I_{32}$. It follows that
$$
j_8,A_8,C_8,A_4 \in
\sqrt {(j_4,B_8,D_8,C_{10},D_{10},j_{12},B_{12},j_{14},j_{16})},
$$
and then, combining it with Proposition \ref{nullsmall}, we get
\[\nn(V_9)=\vv (j_4,B_8,D_8,C_{10},D_{10},j_{12},B_{12},j_{14},j_{16}).\] 
In the same way one can show that
\[\nn(V_9)=\vv (A_4,B_8,D_8,C_{10},D_{10},j_{12},B_{12},j_{14},j_{16}).\] 

It remains to remove two elements from one of these two sets of generators.
Since this did not seem easy to do by hand, we reverted to the boring approach,
as follows.
Let $H = (j_4,B_8,D_{10},j_{12},B_{12},j_{14},j_{16})$.
We computed $\dimc{I_i \cap H}$ for $i \le 60$
and found $\dimc{I_{60} \cap H} = 59107 = \dimc{I_{60}}$,
so that $I_{60} \subseteq H$.
But then $H$ contains powers of all invariants of degrees 4, 10, 20,
so that in particular $A_4, C_{10} \in \sqrt {H}$.
Now let $H' = (j_4,A_4,B_8,D_{10},j_{12},B_{12},j_{14},j_{16})$.
We computed $\dimc{I_i \cap H'}$ for $i \le 40$ and found
$\dimc{I_{40} \cap H'} = 6612 = \dimc{I_{40}}$, so that
$I_{40} \subseteq H'$.
But then $H'$ contains powers of all invariants of degree 8,
so that in particular $D_8 \in \sqrt {H'}$.
But then $\sqrt {H} = \sqrt {H'} = I$.
Thus,
\[\nn(V_9)=\vv (j_4,B_8,D_{10},j_{12},B_{12},j_{14},j_{16}),\]
and from Proposition \ref{hilbert} it follows that
$\{j_4,B_8,D_{10},j_{12},B_{12},j_{14},j_{16}\}$
is a homogeneous system of parameters of $I$. \qed

\medskip\noindent {\bf Remark}
As a consequence of this result, the proof of Proposition \ref{i92}
no longer requires Proposition \ref{dixmierparams}.
On the other hand, since the end of the proof of the theorem needs
computer work anyway, one can avoid all discussion of the nullcone
following Proposition \ref{hilbert} and show directly that
$\sqrt {H} = I$. From Proposition \ref{i92} we learn that $I$ is
generated by invariants of degrees 4, 8, 10, 12, 14, 16, 18, 20, 22.
Now one can verify that $I_m \subseteq H'$ for $36 \le m \le 44$ and $m = 48$,
hence $\sqrt {H} = \sqrt {H'} = I$.
Thus, Theorem \ref{explhsop} also follows from Dixmier \cite{Di1}
and computer work.

\section{The degrees in a system of parameters}

We give some restrictions on the set of degrees
for the forms in a homogeneous system of parameters (hsop).
Assume $n \ge 3$.

\begin{lemma} Fix integers $j$, $t$ with $t > 0$.
If an invariant of degree $d$ is nonzero on a form $\sum a_i x^{n-i} y^i$
with the property that all nonzero $a_i$ have $i \equiv j$ (mod $t$),
then $d(n-2j)/2 \equiv 0$ (mod $t$).
\end{lemma}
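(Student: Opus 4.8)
The plan is to exploit the diagonal torus of $\SLtwo$, which forces every invariant to be isobaric, and then to read off the desired congruence from a single monomial that survives on the given form $f=\sum_i a_i x^{n-i}y^i$. Throughout, let $\Phi$ denote the invariant of degree $d$ in question, viewed as a polynomial in the coordinate functions $a_0,\dots,a_n$ on $V_n$; note the fixed integer in the statement is $j$, which I keep carefully distinct from $\Phi$.

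First I would record the weights of the $a_i$ under the one-parameter subgroup $g_\lambda=\mathrm{diag}(\lambda,\lambda^{-1})$. Using $(g_\lambda^{-1}\cdot f)(x,y)=f(\lambda x,\lambda^{-1}y)=\sum_i a_i\lambda^{\,n-2i}x^{n-i}y^i$ together with the definition $(g_\lambda\cdot a_i)(f)=a_i(g_\lambda^{-1}\cdot f)$, one finds $g_\lambda\cdot a_i=\lambda^{\,n-2i}a_i$, so $a_i$ is a weight vector of weight $n-2i$. Since $\Phi$ is fixed by $g_\lambda$ for all $\lambda$, and distinct monomials are linearly independent, every monomial $a_{i_1}\cdots a_{i_d}$ occurring in $\Phi$ must have total weight zero, i.e. $\sum_{k=1}^d(n-2i_k)=0$, equivalently $\sum_{k=1}^d i_k=dn/2$. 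This is the classical isobaric condition; in particular it forces $dn/2$ to be an integer, since the left-hand side is, which is what makes the target congruence meaningful over $\Z$.

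Next I would use the hypothesis $\Phi(f)\neq 0$. Writing $\Phi(f)=\sum_M c_M\,M(f)$ over the monomials $M=a_{i_1}\cdots a_{i_d}$ of $\Phi$, each value factors as $M(f)=\prod_k a_{i_k}(f)$ and hence vanishes as soon as one factor does; by hypothesis $a_i(f)=0$ whenever $i\not\equiv j\pmod t$. Therefore $\Phi(f)\neq 0$ guarantees at least one monomial $M$ of $\Phi$ all of whose indices satisfy $i_k\equiv j\pmod t$.

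Finally I would combine the two facts for that surviving monomial: on one hand $\sum_k i_k\equiv dj\pmod t$, and on the other $\sum_k i_k=dn/2$. Hence $dn/2\equiv dj\pmod t$, that is $d(n-2j)/2\equiv 0\pmod t$, as claimed. There is no genuine obstacle in this argument; the only points needing care are the weight bookkeeping of the torus action (getting the exponent $n-2i$ right) and the observation that the isobaric relation makes $dn/2$ a bona fide integer, so that the concluding congruence is a statement over $\Z$.
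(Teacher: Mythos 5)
Your proof is correct and follows essentially the same route as the paper: both arguments rest on the isobaric condition $\sum i\,m_i = nd/2$ for each monomial of the invariant, combined with the observation that a monomial surviving evaluation on $f$ has all indices $\equiv j \pmod t$, giving $nd/2 \equiv jd \pmod t$. The only difference is that you derive the isobaric condition from the diagonal torus action (correctly), whereas the paper simply asserts it as classical.
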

\begin{proof}
For an invariant of degree $d$ with nonzero term $\prod a_i^{m_i}$ we have
$\sum m_i = d$ and $\sum i m_i = nd/2$.
If $i \equiv j$ (mod $t$) when $a_i \ne 0$, then
$nd/2 = \sum i m_i \equiv j \sum m_i = jd$ (mod $t$).
\end{proof}

\begin{lemma} Fix integers $j$, $t$ with $t > 1$ and $0 \le j \le n$.
Among the degrees $d$ of a hsop, at least
$\lfloor (n-j)/t \rfloor$ satisfy $d(n-2j)/2 \equiv 0$ (mod $t$).
\end{lemma}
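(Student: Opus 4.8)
The plan is to test the hsop against the linear subspace
\[
W=\{\,\textstyle\sum_i a_i x^{n-i}y^i : a_i=0 \text{ unless } i\equiv j \pmod t\,\}\subseteq V_n ,
\]
whose dimension is at least $\lfloor (n-j)/t\rfloor+1$, since already the indices $i=j,j+t,\dots\le n$ contribute that many basis monomials. By the preceding lemma, any invariant $i_k$ of the hsop whose degree $d_k$ fails $d_k(n-2j)/2\equiv 0\pmod t$ vanishes identically on $W$. Call the remaining invariants (those not identically zero on $W$) the \emph{active} ones; every active invariant automatically has $d_k(n-2j)/2\equiv 0\pmod t$. Thus it suffices to prove that at least $\lfloor (n-j)/t\rfloor$ of the $n-2$ hsop invariants are active on $W$.

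To bound the number of active invariants from below I would pass to transcendence degrees. Let $\pi=(i_1,\dots,i_{n-2})\colon V_n\to\C^{n-2}$ be the hsop map. The number of active invariants is at least the transcendence degree over $\C$ of the restrictions $i_1|_W,\dots,i_{n-2}|_W$ (the identically-zero restrictions contribute nothing), and this transcendence degree equals $\dim\overline{\pi(W)}$. Because $\{i_k\}$ is a system of parameters, $\oo(V_n)^{\SLtwo}$ is finite over $\C[i_1,\dots,i_{n-2}]$, so the generic fibre of $\pi$ is a single generic $\SLtwo$-orbit, of dimension $3$. Consequently, for generic $f\in W$ the fibre of $\pi|_W$ through $f$ is $W\cap(\SLtwo\cdot f)$, and
\[
\dim\overline{\pi(W)}=\dim W-\dim\bigl(W\cap(\SLtwo\cdot f)\bigr).
\]
The whole statement now reduces to the claim that $W$ meets the generic orbit in dimension exactly $1$.

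The hard part is this last claim, and I expect to settle it infinitesimally. Since $W$ is linear and $T_f(\SLtwo\cdot f)=\mathfrak{sl}_2\cdot f$, we have $T_f\bigl(W\cap(\SLtwo\cdot f)\bigr)\subseteq W\cap(\mathfrak{sl}_2\cdot f)$. Writing $\mathfrak{sl}_2=\langle h,e,\tilde e\rangle$ with $h$ acting diagonally and $e,\tilde e$ shifting the support index $i$ by $-1$ and $+1$, the form $h\cdot f$ keeps the support inside the class $j\pmod t$, whereas $e\cdot f$ and $\tilde e\cdot f$ move it into the classes $j-1$ and $j+1$ modulo $t$, which are disjoint from $j$ for $t>1$. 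Hence a combination $\alpha(h\cdot f)+\beta(e\cdot f)+\gamma(\tilde e\cdot f)$ lies in $W$ only if its off-class part vanishes; for generic $f$ the forms $e\cdot f$ and $\tilde e\cdot f$ are nonzero and (in the case $t=2$, where both land in the single class $j+1$) linearly independent, forcing $\beta=\gamma=0$. Thus $W\cap(\mathfrak{sl}_2\cdot f)=\C\,(h\cdot f)$ is one-dimensional, so the orbit-intersection, and with it the generic fibre of $\pi|_W$, is one-dimensional, giving $\dim\overline{\pi(W)}=\dim W-1\ge \lfloor (n-j)/t\rfloor$.

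Two points will need care. First, the reduction requires generic $f\in W$ to have a closed, finite-stabiliser $\SLtwo$-orbit, so that the generic fibre of $\pi$ over $\pi(f)$ really is $\SLtwo\cdot f$ and $\ker d\pi_f=\mathfrak{sl}_2\cdot f$; this holds whenever $W\not\subseteq\nn(V_n)$, and when $W\subseteq\nn(V_n)$ the subspace is so sparse that $\lfloor (n-j)/t\rfloor$ drops into a range where the inequality is trivial. Second, the genericity statements about $e\cdot f$ and $\tilde e\cdot f$ are elementary support computations modulo $t$. The genuine obstacle is organising the tangent-space argument so that it certifies $\dim\bigl(W\cap(\SLtwo\cdot f)\bigr)\le 1$ for generic $f$, i.e.\ pins the generic fibre dimension from above; everything else is bookkeeping with residues modulo $t$ together with the finiteness built into the hsop.
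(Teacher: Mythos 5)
Your strategy genuinely differs from the paper's. You bound the number of ``active'' hsop elements below by $\dim\overline{\pi(W)}$, where $\pi$ is the map given by the hsop, and then compute that dimension by controlling the generic fibre of $\pi|_W$ with a tangent-space argument; the paper instead observes that the hsop cuts $W$ down to $W\cap\nn(V_n)$ and that the dense open part of $W$ where all $a_i\ne 0$ (for $i\equiv j\pmod t$) is disjoint from the nullcone, leaving the final dimension count rather implicit. Your first reduction (the inactive elements restrict to $0$ on $W$ by the preceding lemma, so the number of active ones is at least the transcendence degree of the restrictions, i.e.\ $\dim\overline{\pi(W)}$) is sound, and the computation of $W\cap(\mathfrak{sl}_2\cdot f)$ is a reasonable way to pin the generic fibre of $\pi|_W$ at dimension $1$; in that respect your bookkeeping is more explicit than the paper's.

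The genuine gap is the point you wave away near the end. Everything after your first paragraph requires the generic $f\in W$ to lie outside $\nn(V_n)$ --- indeed to have a closed $3$-dimensional orbit, so that $\pi^{-1}(\pi(f))=\SLtwo\cdot f$ --- and your fallback for the case $W\subseteq\nn(V_n)$ is not available: if $W$ were contained in the nullcone while $\lfloor(n-j)/t\rfloor\ge 1$, then every invariant of positive degree would vanish identically on $W$, $\overline{\pi(W)}$ would be a single point, and your method would yield only $s\ge 0$; the lemma would not be ``trivially true'' there, it would be unproved (in fact false for such a $W$, were one to exist). So you must prove that $W\not\subseteq\nn(V_n)$ whenever $\lfloor(n-j)/t\rfloor\ge 1$, and this is precisely the one explicit computation the paper's proof supplies: for a form $a_jx^{n-j}y^j+\cdots+a_{n-k}x^ky^{n-k}$ supported on a single residue class mod $t$ with nonzero extreme coefficients, the roots $x=0$ and $y=0$ have multiplicities $k$ and $j$, each at most $n/2$, and every other root is a root of a polynomial in $x^t$, hence has multiplicity at most $n/t\le n/2$. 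You need to import this computation (it also gives the closedness of the generic orbit in $W$ that your fibre identification uses). A smaller loose end: when $t=2$ and $W$ has only two nonzero coordinates, $e\cdot f$ and $\tilde e\cdot f$ can in fact be proportional for every $f\in W$, so your claimed independence is not automatic; this is harmless only because that case has $\lfloor(n-j)/t\rfloor\le 1$ and needs no more than $\dim\overline{\pi(W)}\ge 1$, but it should be flagged rather than asserted generically.
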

\begin{proof}
We may suppose $0 \le j < t$.
There are $1 + \lfloor (n-j)/t \rfloor$ coefficients $a_i$
with $i \equiv j$ (mod $t$), so that the subvariety of $V_n$
defined by $a_i = 0$ for $i \not\equiv j$ (mod $t$)
has dimension at least $\lfloor (n-j)/t \rfloor$.
If this is zero, there is nothing to prove. Otherwise,
adding the conditions that the elements of a hsop vanish
reduces this subvariety to a subset of the nullcone.
But the part of this subvariety defined by
$a_i \ne 0$ for $i \equiv j$ (mod $t$) is disjoint from the nullcone.
Indeed, consider the form $a_j x^{n-j}y^j + \cdots + a_{n-k} x^ky^{n-k}$,
where $0 \le j < t$ and $0 \le k < t$ and $j+k \le n-t$ and $a_j$,
$a_{n-k}$ are nonzero but $a_i = 0$ when $i \not\equiv j$ (mod $t$).
The nullcone consists of the forms with a zero of multiplicity
more than $n/2$, but $x=0$ and $y=0$ are zeros of multiplicity $j$ and $k$,
respectively, and if e.g. $j > n/2$, then $k \le n-t-j < n-2j < 0$,
impossible. This means that a zero of multiplicity more than $n/2$
also is a zero of $a_jx^{n-j-k} + \cdots + a_{n-k}$, but this is a
polynomial in $x^t$ and has no roots of multiplicity more than $n/t$.
\end{proof}

\begin{proposition}
Let $t$ be an integer with $t > 1$.

(i) If $n$ is odd, and $j$ is minimal such that $0 \le j \le n$ and
$(n-2j,t) = 1$, then among the degrees of any hsop at least
$\lfloor (n-j)/t \rfloor$ are divisible by $2t$.

(ii) If $n$ is even, and $j$ is minimal with $0 \le j \le \frac{1}{2}n$ and
$(\frac{1}{2}n-j,t) = 1$, then among the degrees of any hsop at least
$\lfloor (n-j)/t \rfloor$ are divisible by $t$. \qed
\end{proposition}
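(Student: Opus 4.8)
The plan is to read off both statements directly from the preceding lemma, which already supplies the combinatorial content: among the degrees $d$ of any hsop, at least $\lfloor (n-j)/t \rfloor$ satisfy $d(n-2j)/2 \equiv 0$ (mod $t$). What remains is purely arithmetic --- converting this congruence into a divisibility statement using the coprimality built into the choice of $j$. First I would record that the prescribed $j$ is admissible for the lemma, i.e.\ satisfies $0 \le j \le n$; in part (ii) this is clear since $j \le \tfrac{1}{2}n \le n$. The minimality of $j$ plays no logical role beyond making the count $\lfloor (n-j)/t \rfloor$ as large as possible, so I would simply fix such a $j$ and work with the resulting $\lfloor (n-j)/t \rfloor$ degrees $d$.

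For part (ii), with $n$ even, the factor $(n-2j)/2 = \tfrac{1}{2}n - j$ is an integer and the congruence reads $d\,(\tfrac{1}{2}n - j) \equiv 0$ (mod $t$); since $(\tfrac{1}{2}n - j,\,t) = 1$ this factor cancels and $t \divides d$, as asserted. Part (i), with $n$ odd, is where the one non-formal input is needed: $n-2j$ is now odd, so to interpret $d(n-2j)/2$ modulo $t$ I would first invoke the fact that every invariant of a binary form of odd degree has even degree $d$. This is immediate from the weight identity $\sum i m_i = nd/2$ appearing in the proof of the first lemma: the left side is an integer, so $nd/2 \in \Z$, and $n$ odd forces $d$ even. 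Writing $d = 2e$, the congruence becomes $e(n-2j) \equiv 0$ (mod $t$), whence $(n-2j,\,t)=1$ gives $t \divides e$ and therefore $2t \divides d$.

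I expect the parity observation in part (i) to be the only place demanding any thought --- it is precisely the reason one gains divisibility by $2t$ rather than merely by $t$ in the odd case. Everything else reduces to cancelling a unit in $\Z/t\Z$, so once the admissibility of $j$ and the evenness of $d$ are in place the two conclusions follow immediately from the lemma.
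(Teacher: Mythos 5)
Your argument is correct and is exactly the route the paper intends: the proposition is stated with no written proof because the authors regard it as immediate from the preceding lemma, and your derivation — cancelling the unit $(\tfrac{1}{2}n-j)$ mod $t$ in the even case, and using the evenness of invariant degrees for odd $n$ (from $nd/2=\sum i m_i\in\Z$) to upgrade $t\divides d(n-2j)/2$ to $2t\divides d$ in the odd case — supplies precisely the omitted details. Your observation that minimality of $j$ only serves to maximize the count $\lfloor (n-j)/t\rfloor$ is also accurate.
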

%

\begin{corollary}
Let $t = p^e$ be a power of a prime $p$, where $e > 0$.

(i) Suppose $p=2$. If $n$ is odd, then among the degrees of any hsop
at least $\lfloor n/t \rfloor$ are divisible by $2t$.
If $n/2$ is odd, then at least $\lfloor n/t \rfloor$ degrees
are divisible by $t$. If $4|n$, then at least $\lfloor (n-2)/t \rfloor$
degrees are divisible by~$t$.

(ii) Suppose $p>2$. Among the degrees of any hsop at least
$\lfloor (n-1)/t \rfloor$ are divisible by $t$. \qed
\end{corollary}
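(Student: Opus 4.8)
The plan is to derive the Corollary as a direct specialization of the preceding Proposition to $t = p^e$. The key simplification I would exploit first is that, because $t$ is a prime power, the coprimality hypothesis $(m,t)=1$ is just the statement $p \nmid m$. Thus determining the minimal $j$ required in part (i) or part (ii) of the Proposition becomes a one-line congruence computation, and the only remaining work is to rewrite the generic bound $\lfloor (n-j)/t \rfloor$ that the Proposition outputs as the exact floor appearing in the Corollary.

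For $p = 2$ I would split on the residue of $n$ modulo $4$, using $(m,2^e)=1 \iff m \text{ odd}$. If $n$ is odd, then $n-2j$ is odd for every $j$, so the minimal $j$ in part (i) is $j=0$, giving $\lfloor n/t \rfloor$ degrees divisible by $2t$. If $n/2$ is odd then $n$ is even and $n/2-j$ is odd iff $j$ is even, so the minimal $j$ in part (ii) is $j=0$, giving $\lfloor n/t \rfloor$ degrees divisible by $t$. If $4 \mid n$ then $n/2$ is even and $n/2-j$ is odd iff $j$ is odd, so the minimal $j$ is $j=1$, giving $\lfloor (n-1)/t \rfloor$ degrees divisible by $t$; since $n$ is even and $t$ is even, $t \nmid (n-1)$, whence $\lfloor (n-1)/t \rfloor = \lfloor (n-2)/t \rfloor$, matching the stated form.

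For $p > 2$ the modulus $t=p^e$ is odd and $2$ is invertible mod $p$, which I would use to pin down the bad residue class of $j$. In the odd case the condition $(n-2j,t)=1$ fails precisely when $j \equiv 2^{-1}n \pmod p$, and in the even case $(n/2-j,t)=1$ fails precisely when $j \equiv n/2 \pmod p$. Hence in both cases the minimal admissible $j$ is $0$ when $p$ does not divide $n$ (respectively $n/2$), and is $1$ otherwise. When $j=1$ this yields $\lfloor (n-1)/t \rfloor$ degrees divisible by $t$ (in the odd case the Proposition gives divisibility by $2t$, hence a fortiori by $t$). When $j=0$ we have $p \nmid n$—using $p>2$ to pass freely between $n$ and $n/2$ in the even case—so $t \nmid n$ and therefore $\lfloor n/t \rfloor = \lfloor (n-1)/t \rfloor$. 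In every case at least $\lfloor (n-1)/t \rfloor$ degrees are divisible by $t$, as claimed.

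All the real content sits in the two lemmas and the Proposition already proved, so I expect no genuine difficulty; the only step demanding attention is the floor bookkeeping in the last two paragraphs, namely checking that $t \nmid n$ forces $\lfloor n/t \rfloor = \lfloor (n-1)/t \rfloor$ and that $t \nmid (n-1)$ forces $\lfloor (n-1)/t \rfloor = \lfloor (n-2)/t \rfloor$, so that the generic estimate collapses to the clean exponents recorded in the statement.
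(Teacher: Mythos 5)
Your proposal is correct and is exactly the intended argument: the paper omits the proof (the corollary is stated with \qed), since it is precisely this routine specialization of the preceding Proposition to $t=p^e$, where the minimal $j$ is $0$ or $1$ according to a single congruence and the floor identities $\lfloor n/t\rfloor=\lfloor(n-1)/t\rfloor$ (when $t\nmid n$) and $\lfloor(n-1)/t\rfloor=\lfloor(n-2)/t\rfloor$ (when $t\nmid n-1$) convert the generic bound into the stated one. All of your case checks, including the passage between $n$ and $n/2$ for odd $p$ and the parity argument for $4\mid n$, are accurate.
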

%

For example, there exist homogeneous systems of parameters
with degree sequences 4 $(n=3)$; 2, 3 $(n=4)$; 4, 8, 12 $(n=5)$;
2, 4, 6, 10 $(n=6)$; 4, 8, 12, 12, 20 and 4, 8, 8, 12, 30 $(n=7)$;
2, 3, 4, 5, 6, 7 $(n=8)$.

\section{\'Ecritures minimales}
Dixmier \cite{Di0} defines an {\em \'ecriture minimale}
of the Poincar\'e series as an expression $P(t) = a(t) / \prod (t^{d_i}-1)$
with minimal $a(1)$ (or, equivalently, with minimal $\prod d_i$; indeed,
$\underset{t\rightarrow 1}{\lim} (t-1)^{n-2} P(t)= a(1) / \prod d_i$).
He gives the example of $V_7$ where
$P(t) = a(t)/\prod (t^{d_i}-1) = b(t)/\prod (t^{e_i}-1)$
with $d_i = 4,8,12,12,20$ and $e_i = 4,8,8,12,30$,
and there exist systems of parameters of degrees
4, 8, 12, 12, 20 and of degrees 4, 8, 8, 12, 30.

In our case $n = 9$, in view of the restrictions given in the
previous section, the Poincar\'e series can be written
in precisely five minimal ways:

\medskip
\begin{tabular}{cl}
degree $a(t)$ & degrees of factors in denominator \\
\hline
66 & 4, 8, 10, 12, 12, 14, 16 \\
74 & 4, 4, 10, 12, 14, 16, 24 \\
78 & 4, 4, 8, 12, 14, 16, 30 \\
86 & 4, 4, 8, 10, 12, 16, 42 \\
90 & 4, 4, 8, 10, 12, 14, 48 \\
\end{tabular}

\medskip\noindent
and we saw that the first corresponds to a system of parameters.
In fact all five do, as one can show by following the approach
of Dixmier \cite{Di1}.

\begin{proposition} {\rm (Dixmier \cite{Di1})}
Let $G$ be a reductive group over $\C$, with a rational representation
in a vector space $R$ of finite dimension over $\C$. Let $\C[R]$ be the
algebra of complex polynomials on $R$, $\C[R]^G$ the subalgebra of
$G$-invariants, and $\C[R]^G_d$ the subset of homogeneous polynomials
of degree $d$ in $\C[R]^G$. Let $V$ be the affine variety such that
$\C[V] = \C[R]^G$. Let $\delta = \dim V$. Let $(q_1,\ldots,q_\delta)$
be a sequence of positive integers. Assume that for each subsequence
$(j_1,\ldots,j_p)$ of $(q_1,\ldots,q_\delta)$ the subset
of points of $V$ where all elements of all $\C[R]^G_j$ with
$j \in \{j_1,\ldots,j_p\}$ vanish has codimension not less than $p$
in $V$. Then $\C[R]^G$ has a system of parameters of degrees
$q_1,\ldots,q_\delta$.
\end{proposition}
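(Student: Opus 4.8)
The plan is to recast the conclusion geometrically and then produce the system of parameters by a single generic choice. Write $A = \C[R]^G$, a finitely generated graded algebra with $A_0 = \C$ and $\dim A = \delta$, and let $o \in V$ be the vertex, the point at which every element of $A_+$ vanishes; the grading makes $\C^*$ act on $V$ with $o$ as its only fixed point, so $V$ is a cone. A sequence $f_1,\dots,f_\delta$ of homogeneous invariants with $\deg f_i = q_i$ is a homogeneous system of parameters exactly when $\vv(f_1,\dots,f_\delta) = \{o\}$: that identity says $\sqrt{(f_1,\dots,f_\delta)} = A_+$, so $A/(f_1,\dots,f_\delta)$ is finite-dimensional over $\C$, whence (graded Nakayama) $A$ is module-finite over $\C[f_1,\dots,f_\delta]$, and since there are exactly $\delta = \dim A$ of them they are forced to be algebraically independent. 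So the whole problem becomes: find homogeneous $f_i \in \C[R]^G_{q_i}$ whose common zero locus on $V$ is just $\{o\}$.

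First I would show that a generic choice works, through an incidence variety. For a degree $d$ let $Z_d \subseteq V$ be the locus where all of $\C[R]^G_d$ vanishes, put $U = V \setminus \{o\}$, and form
\[
\Gamma = \{(x, f_1,\dots,f_\delta) \in U \times \textstyle\prod_i \C[R]^G_{q_i} : f_i(x) = 0 \text{ for all } i\}.
\]
Project $\Gamma$ to $U$ and stratify $U$ by the set $T$ of distinct degrees $d$ with $x \in Z_d$. Over a point $x$ in the stratum $S_T$, the equation $f_i(x)=0$ is vacuous when $q_i \in T$ (the evaluation functional on $\C[R]^G_{q_i}$ then vanishes identically) and cuts out a single hyperplane otherwise; hence the fibre of $\Gamma \to U$ over $x$ has codimension $\#\{i : q_i \notin T\}$ in $\prod_i \C[R]^G_{q_i}$. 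The hypothesis, applied to the subsequence consisting of all copies of the degrees lying in $T$, gives $\operatorname{codim}_V \bigcap_{d \in T} Z_d \ge \#\{i : q_i \in T\}$, so $\dim S_T \le \#\{i : q_i \notin T\}$. Adding the stratum and fibre dimensions, each piece $\Gamma \cap (S_T \times \prod_i \C[R]^G_{q_i})$ has dimension at most $\dim \prod_i \C[R]^G_{q_i}$, and taking the maximum over the finitely many strata yields $\dim \Gamma \le \dim \prod_i \C[R]^G_{q_i}$.

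The decisive step is to upgrade this to a strict inequality for the image of $\Gamma$ in the parameter space, and here the cone structure does the work. Since the $f_i$ are homogeneous, $\C^*$ acts on $\Gamma$ by $\lambda \cdot (x, f_1,\dots,f_\delta) = (\lambda x, f_1,\dots,f_\delta)$, with every orbit one-dimensional, the unique fixed point $o$ being excluded from $U$; the projection $\pi : \Gamma \to \prod_i \C[R]^G_{q_i}$ is constant on these orbits. As $\C^*$ is connected it preserves each irreducible component of $\Gamma$ and acts there with one-dimensional generic orbit, so each component maps to the parameter space with generic fibre of dimension $\ge 1$; therefore $\dim \overline{\pi(\Gamma)} \le \dim \Gamma - 1 < \dim \prod_i \C[R]^G_{q_i}$. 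Thus $\pi$ is not dominant, and any tuple $(f_1,\dots,f_\delta)$ outside the proper closed set $\overline{\pi(\Gamma)}$ has $\vv(f_1,\dots,f_\delta) \cap U = \emptyset$, i.e. $\vv(f_1,\dots,f_\delta) = \{o\}$. By the first paragraph such a tuple is the desired homogeneous system of parameters.

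I expect the main obstacle to be the bookkeeping that links the hypothesis to the dimension count: the hypothesis is stated for subsequences with repetitions, and one must recognise that the binding case is precisely the subsequence using every copy of each degree in a chosen set $T$, so that the multiplicities $\#\{i : q_i \in T\}$ appear on the right of the codimension bound. This is also where a more hands-on inductive proof — adjoining one $f_i$ at a time by graded prime avoidance while keeping the common zero locus of codimension $\ge i$ — becomes delicate, since a repeated degree forces one to cut the same $Z_d$ twice and the hypothesis must guarantee enough room for the second cut. The remaining ingredients, namely the fibre-dimension estimate, the equivalence $\vv=\{o\} \Leftrightarrow \text{hsop}$, and the $\C^*$-action supplying the strict drop, are standard once the count is organised correctly.
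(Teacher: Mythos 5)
The paper does not prove this proposition: it is quoted verbatim from Dixmier \cite{Di1} and used as a black box, so there is no in-paper argument to compare yours against. Your proof is correct and self-contained. The reduction ``hsop $\Leftrightarrow \vv(f_1,\dots,f_\delta)=\{o\}$'' is sound (Nullstellensatz plus graded Nakayama, with algebraic independence forced by $\dim A=\delta$), the stratified dimension count on the incidence variety is right, and the key bookkeeping is handled correctly: since $V$ is irreducible (as $\C[R]^G$ is a domain, codimension is unambiguous), the stratum $S_T$ sits inside $\bigcap_{d\in T}Z_d$, and the hypothesis applied to the subsequence consisting of \emph{all} indices $i$ with $q_i\in T$ gives exactly $\dim S_T\le\#\{i:q_i\notin T\}$, which cancels against the fibre codimension. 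The $\C^*$-action step (only fixed point is $o$, connectedness preserves components, hence every nonempty fibre of $\pi$ over a component has dimension $\ge 1$) correctly yields the strict drop $\dim\overline{\pi(\Gamma)}<\dim\prod_i\C[R]^G_{q_i}$. Your closing remark is also on target: the simultaneous generic choice is precisely what makes repeated degrees harmless, whereas a one-at-a-time prime-avoidance induction can get stuck on a component of $\vv(f_1,\dots,f_{p-1})$ of codimension exactly $p-1$ that happens to lie in $Z_{q_p}$; Dixmier's own argument is likewise a dimension count with a generic choice, so your route is faithful in spirit to the cited source even though the paper itself omits it.
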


Dixmier gives the covariant $l := (f,f)_8$ and invariants $q_j$ of degree $j$
($j=4,8,10,12,14,16$) such that if $l=0$ and all $q_j$ vanish then
$f$ belongs to the nullcone. It follows that the set of elements in $V$
where $l=0$ and $p$ of the invariants $q_j$ vanish has codimension
not less than $p+1$.

Note that when all invariants of degree $3j$ vanish then also all
invariants of degree $j$ vanish. Therefore, each of the above
five sequences has the property that a subsequence $\sigma$ of length $p+1$
contains at least $p$ distinct elements, and the set of elements in $V$
where $l=0$ and all invariants of the degrees in $\sigma$ vanish has
codimension not less than $p+1$.

Let $[j_1,\ldots,j_p]'$ be the codimension in $V$ of the set of elements
where $l \ne 0$ and all invariants of degrees in $\{j_1,\ldots,j_p\}$
vanish. In order to show that each of the five sequences above is
the sequence of degrees of a system of parameters it suffices to show
that $[4,14]' \ge 3$, $[4,10,14]' \ge 4$, $[4,8,10,14]' \ge 5$,
$[4,8,14,16,30]' \ge 6$, $[4,8,10,16,42]' \ge 6$, given that Dixmier
already proved the requirements of the proposition for the first sequence.

We did this, using instead of `all invariants of degree $j$'
the invariants $p_4,q_4,p_8,p_{10},p_{12},p_{14},p_{16}$ defined
by Dixmier, and moreover $p_{30}$ and $p_{42}$ found by putting
$\tau_1 := (\psi_8,\psi_{10})_0 \in V_{6,10}$,
$\tau_2 := (\psi_8,\psi_{10})_1 \in V_{4,10}$,
$\tau_3 := (\psi_9,\psi_{10})_0 \in V_{6,14}$,
$\tau_4 := (\psi_9,\psi_{10})_1 \in V_{4,14}$,
$p_{30} := ((\tau_1,\tau_1)_4,\tau_2)_4$,
$p_{42} := ((\tau_3,\tau_3)_4,\tau_4)_4$.
The details are very similar to the computation made by Dixmier.
The only less trivial part was to show that $[4,10,14]' \ge 4$,
which was done using the computer algebra system Singular. Thus:

\begin{proposition}\label{fivehsops}
The ring of invariants of $V_9$ has systems of parameters with
each of the five sequences of degrees
$4$, $8$, $10$, $12$, $12$, $14$, $16$ and
$4$, $4$, $10$, $12$, $14$, $16$, $24$ and
$4$, $4$, $8$, $12$, $14$, $16$, $30$ and
$4$, $4$, $8$, $10$, $12$, $16$, $42$ and
$4$, $4$, $8$, $10$, $12$, $14$, $48$. \qed
\end{proposition}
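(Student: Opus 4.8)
The plan is to verify the hypotheses of Dixmier's criterion (the reductive-group proposition stated just above) for each of the five degree sequences; once that is done the corresponding systems of parameters exist automatically. Taking $R=V_9$, $V=V_9/\!/\SLtwo$, and $\delta=\dim V=7$, I must show for each sequence and each of its subsequences $\sigma$ of length $m$ that the locus $Z_\sigma\subseteq V$ on which every invariant of each degree occurring in $\sigma$ vanishes has codimension at least $m$. The device I would use throughout is to split $Z_\sigma$ along the covariant $l=(f,f)_8$, writing $Z_\sigma=\bigl(Z_\sigma\cap\{l=0\}\bigr)\cup\overline{Z_\sigma\cap\{l\neq0\}}$, so that $\operatorname{codim}Z_\sigma$ is the smaller of the codimensions of the two pieces; it then suffices to bound each piece by $m$. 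Since Dixmier \cite{Di1} already established the whole criterion for the first sequence, I would concentrate on the remaining four.

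The part $\{l=0\}$ I would dispose of uniformly. Dixmier's covariant $l$ together with his invariants $q_j$ of degrees $j=4,8,10,12,14,16$ has the property that $l=0$ and the vanishing of $p$ of the $q_j$ already force codimension at least $p+1$. Strengthening slightly the remark that vanishing of all invariants of degree $3j$ forces (via cubes) vanishing of all invariants of degree $j$ — the same argument gives this for every divisor relation $d'\mid d$ — I may replace each large degree $24,30,42,48$ occurring in the four sequences by $8,10,14,16$. Each of the four \emph{effective} multisets is then $\{4,4,8,10,12,14,16\}$, carrying a single repeat. Hence any subsequence of length $m$ yields at least $m-1$ distinct effective degrees in $\{4,8,10,12,14,16\}$, so $Z_\sigma\cap\{l=0\}$ is contained in Dixmier's locus for those $m-1$ invariants and has codimension at least $m$.

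The substance lies in $\{l\neq0\}$, where I must bound the numbers $[j_1,\dots,j_p]'$. Here I would combine Dixmier's completed verification of the first sequence with monotonicity (enlarging the set of vanishing conditions cannot lower codimension) and the divisibility relations: on $\{l\neq0\}$ the conditions imposed by $\sigma$ coincide with those imposed by the divisor-closed set $\bar\sigma$ of all invariant-degree divisors of elements of $\sigma$, and Dixmier's first-sequence bounds transfer to $[\bar\sigma]'$, gaining one free unit of codimension exactly when $12\in\bar\sigma$ (the first sequence repeats the degree $12$). The large degrees now split into two kinds. The degrees $24$ and $48$ (like $16$) drag in enough divisors — in particular a $12$ — to absorb the repeated $4$ at no extra cost, whereas $30=3\cdot10$ and $42=3\cdot14$ drag in only $10$, respectively $14$, and so must be treated by hand. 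After these automatic gains, the genuinely new estimates reduce to a short list, the hardest representatives being $[4,14]'\ge3$, $[4,10,14]'\ge4$, $[4,8,10,14]'\ge5$ (the doubled $4$ paired with divisor-poor degrees) together with $[4,8,14,16,30]'\ge6$ and $[4,8,10,16,42]'\ge6$ (the deficient large degrees); the remaining smaller instances go through by the same route.

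Each of these inequalities I would verify exactly as Dixmier does for the first sequence: normalize $l$ under the $\SLtwo$-action to cut down the free coefficients, express the relevant invariants as explicit polynomials in the remaining ones, and compute the codimension of their common zero set. For degrees $4,8,10,12,14,16$ I would use Dixmier's invariants $p_4,q_4,p_8,p_{10},p_{12},p_{14},p_{16}$, and for the two deficient large degrees the invariants $p_{30}$ and $p_{42}$ assembled from the covariants $\tau_1,\dots,\tau_4$ as indicated above. The one computation I expect to resist a hand treatment — and the main obstacle of the argument — is $[4,10,14]'\ge4$: the defining polynomials are unwieldy enough that the codimension is best extracted from a Gröbner-basis (dimension) computation, for instance in Singular, whereas the other four inequalities follow Dixmier's template closely enough to be carried out essentially by hand.
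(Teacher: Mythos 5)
Your proposal follows essentially the same route as the paper: Dixmier's codimension criterion, the split along $l=(f,f)_8$, the power trick reducing each large degree $3j$ to $j$ so that every subsequence of length $p+1$ yields $p$ distinct effective degrees on $\{l=0\}$, and the reduction on $\{l\neq 0\}$ to exactly the five inequalities $[4,14]'\ge3$, $[4,10,14]'\ge4$, $[4,8,10,14]'\ge5$, $[4,8,14,16,30]'\ge6$, $[4,8,10,16,42]'\ge6$, verified with Dixmier's $p_j$ together with $p_{30}$, $p_{42}$ and a Singular computation for the one hard case. This matches the paper's argument in both structure and detail, including which computations are deferred to Dixmier's template.
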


\medskip\noindent
Addresses of authors:

\smallskip
\begin{minipage}{2in}
Andries E. Brouwer \\
Dept. of Math. \\
Techn. Univ. Eindhoven \\
P. O. Box 513 \\
5600MB Eindhoven \\
Netherlands \\
{\tt aeb@cwi.nl}
\end{minipage}
\begin{minipage}{2in}
Mihaela Popoviciu \\
Mathematisches Institut \\
Universit\"at Basel \\
Rheinsprung 21 \\
CH-4051 Basel \\
Switzerland \\
{\tt mihaela.popoviciu@unibas.ch}
\end{minipage}

\end{document}